\def\version{August 15, 2011}
\DeclareOldFontCommand{\brianup}{\upshape}{\mathrm}
\DeclareSymbolFont{EUR}{U}{eur}{m}{n}
\DeclareSymbolFontAlphabet{\eur}{EUR}
\DeclareSymbolFont{EUB}{U}{eur}{b}{n}
\DeclareSymbolFontAlphabet{\eub}{EUB}
\DeclareSymbolFont{AMSb}{U}{msb}{m}{n}
\DeclareSymbolFontAlphabet{\mathbb}{AMSb}
\newcommand{\notyet}[1]{{}}
\newcommand{\p}{\partial}
\newcommand{\at}[1]{\vert\sb{\sb{#1}}}
\def\R{\mathbb{R}}
\newcommand{\C}{\mathbb{C}}
\newcommand{\N}{\mathbb{N}}
\newcommand{\abs}[1]{\vert #1 \vert}
\newcommand{\norm}[1]{\Vert #1 \Vert}
\newcommand{\sothat}{{\rm ;}\ }
\newcommand{\Range}{\mathop{\rm Range}}
\DeclareMathSymbol{\varGamma}{\mathord}{letters}{"00}
\DeclareMathSymbol{\varDelta}{\mathord}{letters}{"01}
\DeclareMathSymbol{\varTheta}{\mathord}{letters}{"02}
\DeclareMathSymbol{\varLambda}{\mathord}{letters}{"03}
\DeclareMathSymbol{\varXi}{\mathord}{letters}{"04}
\DeclareMathSymbol{\varPi}{\mathord}{letters}{"05}
\DeclareMathSymbol{\varSigma}{\mathord}{letters}{"06}
\DeclareMathSymbol{\varUpsilon}{\mathord}{letters}{"07}
\DeclareMathSymbol{\varPhi}{\mathord}{letters}{"08}
\DeclareMathSymbol{\varPsi}{\mathord}{letters}{"09}
\DeclareMathSymbol{\varOmega}{\mathord}{letters}{"0A}
\theoremstyle{plain}
\newtheorem{lemma}{Lemma}[section]
\newtheorem{corollary}[lemma]{Corollary}
\theoremstyle{definition}
\newtheorem{definition}[lemma]{Definition}
\theoremstyle{remark}
\newtheorem{remark}[lemma]{Remark}
\makeatletter\@addtoreset{equation}{section}
\makeatletter\@addtoreset{theorem}{section}
\def\spec{\sigma}
\renewcommand{\Re}{\mathop{\rm{R\hskip -1pt e}}\nolimits}
\renewcommand{\Im}{\mathop{\rm{I\hskip -1pt m}}\nolimits}
\begin{document}

\title{On the meaning of the Vakhitov-Kolokolov stability criterion
for the nonlinear Dirac equation}

\author{
{\sc Andrew Comech}
\\
{\it\small Texas A\&M University, College Station, Texas, U.S.A.}
{\small and}
{\it\small IITP,
Moscow, Russia}
}


\date{\version}

\maketitle

\begin{abstract}
We consider the spectral stability
of solitary wave solutions
$\phi(x)e^{-i\omega t}$
to the nonlinear Dirac equation
in any dimension.
This equation is well-known to theoretical physicists
as the Soler model
(or, in one dimension, the Gross-Neveu model),
and attracted much attention for many years.
We show that,
generically,
at the values of $\omega$
where the Vakhitov-Kolokolov stability criterion
breaks down,
a pair of real eigenvalues (one positive, one negative)
appears from the origin,
leading to the linear instability of corresponding solitary waves.

As an auxiliary result,
we state the Virial identities
(``Pohozhaev theorem'')
for the nonlinear Dirac equation.
We also show that $\pm 2\omega i$
are the eigenvalues of the nonlinear Dirac equation
linearized at $\phi(x)e^{-i\omega t}$,
which are embedded into the essential spectrum
as long as $\abs{\omega}>m/3$.
This result holds for the nonlinear Dirac equation
with any nonlinearity
of the Soler form
(``scalar-scalar interaction'')
and in any dimension.

As an illustration of the spectral stability methods,
we revisit Derrick's theorem
and sketch
the Vakhitov-Kolokolov stability criterion for the
nonlinear Schr\"odinger equation.
\end{abstract}

\section{Introduction}

Field equations
with nonlinearities
of local type
are natural candidates
for developing tools
which are then used
for the analysis of systems
of interacting equations.
Equations with local nonlinearities
have been appearing in the Quantum Field Theory
starting perhaps since fifties
\cite{PhysRev.84.1,PhysRev.84.10},
in the context
of the classical nonlinear meson theory of nuclear forces.
The nonlinear version of the Dirac equation
is known as the Soler model \cite{PhysRevD.1.2766}.
The existence of standing waves in this model
was proved in
\cite{PhysRevD.1.2766,MR847126}.
Existence of localized solutions to the Dirac-Maxwell system
was addressed in \cite{wakano-1966,MR1364144}
and finally was proved in \cite{MR1386737} (for $\omega\in(-m,0)$)
and \cite{MR1618672} (for $\omega\in(-m,m)$).
The local well-posedness
of the Dirac-Maxwell system
was considered in \cite{MR1391520}.
The local and global well-posedness
of the Dirac equation
was further addressed in
\cite{MR1434039}
(semilinear Dirac equation in $n=3$),
\cite{MR1760283}
(Dirac -- Klein-Gordon system in $n=1$),
and in \cite{MR2108356}
(nonlinear Dirac equation in $n=3$).
The question of stability of solitary wave solutions
to the nonlinear Dirac equation
attracted much attention for many years,
but only partial numerical results were obtained;
see e.g.
\cite{MR637021,MR710348,as1983,as1986,chugunova-thesis}.
The analysis of stability with respect to dilations
is performed in \cite{MR848095,PhysRevE.82.036604}.

Understanding the linear stability is the first step
in the study of stability properties of solitary waves.
Absence of an eigenvalue
with a positive real part
will be referred to as the
spectral stability,
while its absence
as the spectral (or linear) instability.
After the spectrum of the linearized problem
for the nonlinear Schr\"odinger equation~\cite{VaKo}
was understood,
the linearly unstable solitary waves
can be proved to be
( ``nonlinearly'', or ``dynamically'') unstable
\cite{MR948770,2010arXiv1009.5184G},
while
the linearly stable solitary waves
of the nonlinear Schr\"odinger and Klein-Gordon equations
\cite{MR723756,MR804458,MR820338}
and more general $\mathbf{U}(1)$-invariant systems~\cite{MR901236}
were proved to be orbitally stable.
The tools used to prove orbital stability
break down for the Dirac equation
since the corresponding energy functional is sign-indefinite.
On the other hand,
one can hope to use the dispersive estimates
for the linearized equation
to prove the asymptotic stability of the standing waves,
similarly to how it is being done
for the nonlinear Schr\"odinger equation
\cite{MR783974},
\cite{MR1170476},
\cite{MR1199635e},
\cite{MR1681113}, and~\cite{MR1835384}.
The first results on asymptotic stability
for the nonlinear Dirac equation
are already appearing~\cite{2010arXiv1008.4514P,2011arXiv1103.4452B},
with
the assumptions on the spectrum of the linearized equation
playing a crucial role.


In this paper,
we study the spectrum of the nonlinear Dirac equation
linearized at a solitary wave,
concentrating
on bifurcation of real eigenvalues
from $\lambda=0$.

\subsubsection*{Derrick's theorem}
As a warm-up, let us consider
the linear instability of stationary solutions
to a nonlinear wave equation,
\begin{equation}\label{dvk-nlwe}
-\ddot\psi=-\Delta\psi+g(\psi),
\qquad
\psi=\psi(x,t)\in\R,
\quad
x\in\R^n,
\quad
n\ge 1.
\end{equation}
We assume that the nonlinearity $g(s)$ is smooth.
Equation \eqref{dvk-nlwe}
is a Hamiltonian system,
with the Hamiltonian
$E(\psi,\pi)
=\int\sb{\R^n}
\Big(
\frac{\pi^2}{2}
+
\frac{\abs{\nabla\psi}^2}{2}
+G(\psi)
\Big)\,dx,
$
where $G(s)=\int\sb 0\sp s g(s')\,ds'$.

There is a well-known result \cite{MR0174304}
about non-existence of stable localized stationary solutions
in dimension $n\ge 3$
(known as \emph{Derrick's Theorem}).
If $u(x,t)=\theta(x)$ is a localized stationary solution
to the Hamiltonian equations $\dot\pi=-\delta\sb{\psi}E$,
$\dot\psi=\delta\sb{\pi}E$,
then, considering the family $\theta\sb\lambda(x)=\theta(\lambda x)$,
one has
$
\p\sb\lambda\at{\lambda=1}E(\phi\sb\lambda)=0,
$
and then it follows that
$
\p\sb\lambda^2\at{\lambda=1}E(\phi\sb\lambda)<0
$
as long as $n\ge 3$.
That is, $\delta^2 E<0$ for a variation corresponding to
the uniform stretching,
and the solution $\theta(x)$ is to be unstable.
Let us modify Derrick's argument
to show the linear instability of stationary solutions in any dimension.

\begin{lemma}[Derrick's theorem for $n\ge 1$]
For any $n\ge 1$,
a smooth
finite energy stationary solution
$\theta(x)$
to the nonlinear wave equation
is linearly unstable.
\end{lemma}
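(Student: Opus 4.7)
The plan is to linearize \eqref{dvk-nlwe} at the stationary profile $\theta(x)$ and reduce linear instability to a spectral question about a Schr\"odinger-type operator. Writing $\psi=\theta+\rho$ and keeping first-order terms yields the linearized evolution $\ddot\rho+L\rho=0$, with $L=-\Delta+g'(\theta)$ acting on $L^2(\R^n)$. An exponentially growing mode $\rho(x,t)=e^{\lambda t}v(x)$ satisfies $Lv=-\lambda^2 v$, so it suffices to exhibit a strictly negative eigenvalue $\mu_0$ of $L$ and take $\lambda=\sqrt{-\mu_0}>0$.

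To locate such an eigenvalue I would exploit translation invariance of the stationary equation $-\Delta\theta+g(\theta)=0$. Differentiating in $x_i$ gives $L(\partial_{x_i}\theta)=0$, so each $\partial_{x_i}\theta$ lies in $\ker L$. Since $\theta$ is smooth, nontrivial, and finite-energy, at least one $\partial_{x_i}\theta$ is a nonzero $L^2$ function that must change sign, because a smooth function decaying at infinity cannot be monotone in $x_i$. Under the standard assumption $g(0)=0$, $g'(0)>0$ (which produces the exponential localization of $\theta$ and places the essential spectrum of $L$ in $[g'(0),\infty)$), every spectral value $\le 0$ lies in the discrete spectrum.

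The decisive input is the Perron--Frobenius / Courant nodal-domain principle for Schr\"odinger operators: the ground-state eigenfunction of $L$ is simple and can be chosen strictly positive. Since $\partial_{x_i}\theta$ is a kernel element of $L$ that changes sign, it cannot be the ground state, so $\mu_0:=\inf\spec L<0$ is an honest discrete eigenvalue. Taking the corresponding eigenfunction $v_0$ yields $\rho(x,t)=e^{\sqrt{-\mu_0}\,t}v_0(x)$, an exponentially growing solution of the linearized equation, which certifies linear instability.

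The main obstacle is the spectral step: promoting the variational bound $\inf\spec L\le 0$ to a \emph{strict} inequality demands the simplicity and positivity of the ground state together with the essential spectrum sitting strictly above zero. It is illuminating to compare with Derrick's original scaling $\theta_\lambda(x)=\theta(\lambda x)$: applying the Pohozhaev identity one finds $\partial_\lambda^2|_{\lambda=1}E(\theta_\lambda)=\langle Lv,v\rangle$ with $v=x\cdot\nabla\theta$, and this quadratic form is strictly negative only for $n\ge 3$, vanishes for $n=2$, and is strictly positive for $n=1$. Thus Derrick's test function fails in low dimension, and the translational direction $\partial_{x_i}\theta$ is what unifies the argument for all $n\ge 1$.
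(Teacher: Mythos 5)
Your argument is essentially the paper's own proof: differentiate the stationary equation to place $\partial_{x_i}\theta$ in the kernel of $L=-\Delta+g'(\theta)$, observe it vanishes/changes sign, invoke the positivity and simplicity of the ground state (the minimum principle) to get a strictly negative eigenvalue $\mu_0$, and convert it into the growing mode $e^{\sqrt{-\mu_0}\,t}v_0$ of the linearized second-order equation. Your explicit remark that one needs the essential spectrum of $L$ to lie above zero (e.g.\ via $g'(0)>0$) to ensure the bottom of the spectrum is a discrete eigenvalue is a sensible clarification of a point the paper leaves implicit, not a divergence in method.
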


\begin{proof}
Since $\theta$ satisfies
$-\Delta\theta+g(\theta)=0$,
we also have $-\Delta\p\sb{x\sb 1}\theta+g'(\theta)\p\sb{x\sb 1}\theta=0$.
Due to $\lim\limits\sb{\abs{x}\to\infty}\theta(x)=0$,
$\p\sb{x\sb 1}\theta$ vanishes somewhere.
According to the minimum principle,
there is a nowhere vanishing smooth function
$\chi\in H\sp\infty(\R^n)$
(due to $\Delta$ being elliptic)
which corresponds to some smaller (hence negative) eigenvalue
of 
$\eur{L}=-\Delta+g'(\theta)$,
$\eur{L}\chi=-c^2\chi$, with $c>0$.
Taking $\psi(x,t)=\theta(x)+r(x,t)$,
we obtain the linearization at $\theta$,
$-\ddot r=-\eur{L} r$,
which we rewrite as
$
\p\sb t
\left[\begin{matrix}r\\s\end{matrix}\right]
=
\left[\begin{matrix}0&1\\-\eur{L}&0\end{matrix}\right]
\left[\begin{matrix}r\\s\end{matrix}\right].
$
The matrix in the right-hand side has
eigenvectors
$\left[\begin{matrix}\chi\\\pm c\chi\end{matrix}\right]$,
corresponding to the eigenvalues
$\pm c\in\R$; thus, the solution $\theta$
is linearly unstable.

Let us also mention that
$\p\sb\tau^2\at{\tau=0}E(\theta+\tau\chi)<0$,
showing that $\delta^2 E(\theta)$
is not positive-definite.
\end{proof}

\begin{remark}
A more general result
on the linear stability
and (nonlinear) instability
of stationary solutions
to \eqref{dvk-nlwe}
is in \cite{MR2356215}.
In particular, it is shown there that
the linearization at a stationary solution
may be spectrally stable
when this particular stationary solution is not from $H\sp 1$
(such examples exist in higher dimensions).
\end{remark}

\subsubsection*{Vakhitov-Kolokolov stability criterion
for the nonlinear Schr\"odinger equation}

To get a hold of stable localized solutions,
Derrick suggested that
{\it elementary particles might correspond to stable, 
localized solutions which are periodic in time,
rather than time-independent.}
Let us consider how this works for the (generalized)
nonlinear Schr\"odinger equation
in one dimension,
\begin{equation}\label{dvk-nls}
i\p\sb t\psi=-\frac 1 2\p\sb x^2\psi+g(\abs{\psi}^2)\psi,
\qquad
\psi=\psi(x,t)\in\C,
\quad
x\in\R,
\quad
t\in\R,
\end{equation}
where $g(s)$ is a smooth function
with $m:=g(0)>0$.
One can easily construct solitary wave solutions
$\phi(x)e^{-i\omega t}$,
for some $\omega\in\R$ and $\phi\in H^1(\R)$:
$\phi(x)$ satisfies the stationary equation
$\omega\phi=-\frac 1 2\phi''+g(\phi^2)\phi$,
and can be chosen strictly positive,
even, and monotonically decaying away from $x=0$.
The value of $\omega$ can not exceed $m$.
We consider the Ansatz
$\psi(x,t)=(\phi(x)+\rho(x,t))e^{-i\omega t}$,
with $\rho(x,t)\in\C$.
The linearized equation on $\rho$
is called the linearization at a solitary wave:
\begin{equation}\label{dvk-nls-lin}
\p\sb t
\eub{R}
=
\eub{J}\eub{L}\eub{R},
\qquad
\eub{R}(x,t)=\left[\begin{matrix}\Re\rho(x,t)\\\Im\rho(x,t)\end{matrix}\right],
\end{equation}
with
\begin{equation}\label{dvk-def-l0-l1}
\eub{J}=\left[\begin{matrix}0&1\\-1&0\end{matrix}\right],
\qquad
\eub{L}=\left[\begin{matrix}\eur{L}\sb{+}&0\\0&\eur{L}\sb{-}\end{matrix}\right],
\qquad
\eur{L}\sb{-}=-\frac 1 2\p\sb x^2+g(\phi^2)-\omega,
\qquad
\eur{L}\sb{+}=\eur{L}\sb{-}+2 g'(\phi^2)\phi^2.
\end{equation}
Note that
since $\eur{L}\sb{-}\ne \eur{L}\sb{+}$,
the action of $\eub{L}$
on $\rho$ considered as taking values in $\C$
is $\R$-linear but not $\C$-linear.
Since $\lim\limits\sb{\abs{x}\to\infty}\phi(x)=0$,
the essential spectrum of $\eur{L}\sb{-}$ and $\eur{L}\sb{+}$
is $[m-\omega,+\infty)$.

First, let us note
that the spectrum of $\eub{J}\eub{L}$ is located
on the real and imaginary axes only:
$\sigma(\eub{J}\eub{L})\subset\R\cup i\R$.
To prove this, we consider
$
(\eub{J}\eub{L})^2
=
-\left[\begin{matrix}
\eur{L}\sb{-}\eur{L}\sb{+}&0\\
0&\eur{L}\sb{+}\eur{L}\sb{-}
\end{matrix}\right].
$
Since $\eur{L}\sb{-}$ is positive-definite
($\phi\in\ker\eur{L}\sb{-}$,
being nowhere zero,
corresponds to its smallest eigenvalue),
we can define the selfadjoint root of $\eur{L}\sb{-}$;
then
\[
\sigma\sb d((\eub{J}\eub{L})^2)
\backslash\{0\}
=\sigma\sb d(\eur{L}\sb{-}\eur{L}\sb{+})
\backslash\{0\}
=\sigma\sb d(\eur{L}\sb{+}\eur{L}\sb{-})
\backslash\{0\}
=\sigma\sb d(\eur{L}\sb{-}^{1/2}\eur{L}\sb{+}\eur{L}\sb{-}^{1/2})
\backslash\{0\}
\subset\R,
\]
with the inclusion
due to
$\eur{L}\sb{-}^{1/2}\eur{L}\sb{+}\eur{L}\sb{-}^{1/2}$
being selfadjoint.
Thus,
any eigenvalue $\lambda\in\sigma\sb{d}(\eub{J}\eub{L})$
satisfies
$\lambda^2\in\R$.

Given the family of solitary waves,
$\phi\sb\omega(x)e^{-i\omega t}$,
$\omega\in\Omega\subset\R$,
we would like to know
at which $\omega$
the eigenvalues
of the linearized equation
with $\Re\lambda>0$ appear.
Since $\lambda^2\in\R$,
such eigenvalues can only be located
on the real axis,
having bifurcated from $\lambda=0$.
One can check that
$\lambda=0$ belongs to the discrete spectrum of $\eub{J}\eub{L}$,
with
\[
\eub{J}\eub{L}\left[\begin{matrix}0\\\phi\sb\omega\end{matrix}\right]=0,
\qquad
\eub{J}\eub{L}\left[\begin{matrix}-\p\sb\omega\phi\sb\omega\\0\end{matrix}\right]
=\left[\begin{matrix}0\\\phi\sb\omega\end{matrix}\right],
\]
for all $\omega$ which correspond to solitary waves.
Thus, if we will restrict our attention to functions which are even in $x$,
the dimension of the generalized null space of $\eub{J}\eub{L}$ is
at least two.
Hence, the bifurcation follows the jump in the
dimension of the generalized null space of $\eub{J}\eub{L}$.
Such a jump happens
at a particular value of $\omega$
if one can solve the equation
$\eub{J}\eub{L}\upalpha=\left[\begin{matrix}\p\sb\omega\phi\sb\omega\\0\end{matrix}\right]$.
This leads to the condition
that $\left[\begin{matrix}\p\sb\omega\phi\sb\omega\\0\end{matrix}\right]$
is orthogonal to the null space of the adjoint to $\eub{J}\eub{L}$,
which contains the vector
$\left[\begin{matrix}\phi\sb\omega\\0\end{matrix}\right]$;
this results in
$\langle\phi\sb\omega,\p\sb\omega\phi\sb\omega\rangle
=\p\sb\omega\norm{\phi\sb\omega}\sb{L\sp 2}^2/2=0$.
A slightly more careful analysis
\cite{MR1995870}
based on construction of the moving frame
in the generalized eigenspace of $\lambda=0$
shows that
there are
two real eigenvalues $\pm\lambda\in\R$
that have emerged from $\lambda=0$ when
$\omega$ is such that
$\p\sb\omega\norm{\phi\sb\omega}\sb{L\sp 2}^2$ becomes positive,
leading to a linear instability of the corresponding solitary wave.
The opposite condition,
\begin{equation}\label{dvk-vk}
\p\sb\omega\norm{\phi\sb\omega}\sb{L\sp 2}^2<0,
\end{equation}
is the Vakhitov-Kolokolov stability criterion
which guarantees the absence of nonzero real eigenvalues
for the nonlinear Schr\"odinger equation.
It appeared in \cite{VaKo,MR723756,MR901236}
in relation to linear and orbital stability
of solitary waves.
The above approach fails
for the nonlinear Dirac equation
since $\eur{L}\sb{-}$ is no longer positive-definite.

\bigskip

For the completeness,
let us present a more precise form
of the Vakhitov-Kolokolov stability criterion \cite{VaKo}.

\begin{lemma}[Vakhitov-Kolokolov stability criterion]
\label{dvk-lemma-vk}
There is $\lambda\in\sigma\sb{p}(\eub{J}\eub{L})$,
$\lambda>0$,
where $\eub{J}\eub{L}$ is the linearization
\eqref{dvk-nls-lin}
at the solitary wave
$\phi\sb\omega(x)e^{-i\omega t}$,
if and only if
$\frac{d}{d\omega}\norm{\phi\sb\omega}\sb{L\sp 2}^2>0$
at this value of $\omega$.
\end{lemma}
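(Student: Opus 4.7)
The plan is to connect positive real eigenvalues of $\eub{J}\eub{L}$ to negative directions of a self-adjoint auxiliary operator, and then translate the sign condition into a statement about $\p_\omega\norm{\phi_\omega}_{L^2}^2$. Building on the identity $(\eub{J}\eub{L})^2=-\mathrm{diag}(\eur{L}_-\eur{L}_+,\eur{L}_+\eur{L}_-)$ noted in the excerpt, I would first observe that an eigenvector $(u,v)^\top$ of $\eub{J}\eub{L}$ with eigenvalue $\lambda>0$ satisfies $\eur{L}_-v=\lambda u$ and $\eur{L}_+u=-\lambda v$; pairing the first equation with $\phi_\omega$ and using $\eur{L}_-\phi_\omega=0$ forces $u\perp\phi_\omega$. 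On the invariant subspace $\{\phi_\omega\}^\perp$ the operator $\eur{L}_-$ is strictly positive, so I take its self-adjoint square root and substitute $u=\eur{L}_-^{1/2}\tilde u$. Injectivity of $\eur{L}_-^{1/2}$ on this subspace reduces the coupled eigenvalue problem to the single self-adjoint equation $A\tilde u=-\lambda^2\tilde u$, where $A:=\eur{L}_-^{1/2}\eur{L}_+\eur{L}_-^{1/2}$. Hence the existence of a positive real eigenvalue of $\eub{J}\eub{L}$ is equivalent to $A$ having a strictly negative eigenvalue.

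Next I would count negative directions of $A$. Since $\langle A\tilde u,\tilde u\rangle=\langle\eur{L}_+u,u\rangle$ for $u=\eur{L}_-^{1/2}\tilde u$, and $\eur{L}_-^{1/2}$ maps the form domain of $A$ onto a dense subspace of $\{\phi_\omega\}^\perp$, the negative inertia of $A$ matches that of the restriction $\eur{L}_+|_{\{\phi_\omega\}^\perp}$. Sturm-Liouville theory on $L^2(\R)$ yields $n_-(\eur{L}_+)=1$, because $\p_x\phi_\omega\in\ker\eur{L}_+$ has a single sign change. I then apply the standard constraint-index formula: $n_-(\eur{L}_+|_{\{\phi_\omega\}^\perp})=n_-(\eur{L}_+)-1=0$ when $\langle\eur{L}_+^{-1}\phi_\omega,\phi_\omega\rangle<0$, and $=n_-(\eur{L}_+)=1$ when this quantity is positive; the proof is a one-variable Lagrange-multiplier calculation applied to a rank-one modification of $\eur{L}_+$. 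Finally, differentiating the profile equation $\omega\phi_\omega=-\tfrac12\phi_\omega''+g(\phi_\omega^2)\phi_\omega$ in $\omega$ yields $\eur{L}_+\p_\omega\phi_\omega=\phi_\omega$, so $\langle\eur{L}_+^{-1}\phi_\omega,\phi_\omega\rangle=\tfrac12\p_\omega\norm{\phi_\omega}_{L^2}^2$. Combining: $A$ has a negative eigenvalue, equivalently $\eub{J}\eub{L}$ has a positive real eigenvalue, precisely when $\p_\omega\norm{\phi_\omega}_{L^2}^2>0$.

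The main technical obstacle is making the unitary-equivalence step rigorous: $\eur{L}_-^{1/2}$ is unbounded, so $A$ must be defined through its quadratic form on the correct form domain, and one must verify that negative eigenvectors of $A$ correspond bijectively via $u=\eur{L}_-^{1/2}\tilde u$ to $L^2$-eigenvectors of $\eur{L}_-\eur{L}_+$ with eigenvalue $-\lambda^2$. One should also check that any negative discrete eigenvalue of $A$ stays strictly below its essential spectrum, which is guaranteed by the hypothesis $\omega<m=g(0)$ together with exponential decay of $\phi_\omega$, placing $\sigma_{\mathrm{ess}}(\eur{L}_\pm)=[m-\omega,\infty)$ above zero. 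The borderline case $\p_\omega\norm{\phi_\omega}_{L^2}^2=0$ corresponds to $A$ acquiring a zero eigenvalue, equivalently to the jump in the dimension of the generalized null space of $\eub{J}\eub{L}$ already discussed in the lemma's preamble and to the emergence of a real pair $\pm\lambda$ from the origin.
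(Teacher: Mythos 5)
Your proposal is correct and follows essentially the same route as the paper: the factorization through $\eur{L}\sb{-}^{1/2}$ reducing positive real eigenvalues of $\eub{J}\eub{L}$ to negativity of $\eur{L}\sb{+}$ on $\{\phi\sb\omega\}^\perp$, the count $n\sb{-}(\eur{L}\sb{+})=1$ from Sturm theory, and the identity $\eur{L}\sb{+}\partial\sb\omega\phi\sb\omega=\phi\sb\omega$ giving $\langle\eur{L}\sb{+}^{-1}\phi\sb\omega,\phi\sb\omega\rangle=\tfrac12\partial\sb\omega\norm{\phi\sb\omega}\sb{L^2}^2$ are exactly the paper's ingredients. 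The only real difference is packaging: where you cite the standard one-constraint negative-index formula, the paper establishes that dichotomy directly via the Lagrange-multiplier equation $\eur{L}\sb{+}r=\alpha r+\beta\phi\sb\omega$ and the monotonicity of $f(z)=\langle\phi\sb\omega,(\eur{L}\sb{+}-z)^{-1}\phi\sb\omega\rangle$, which is precisely the usual proof of that formula.
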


\begin{proof}
We follow \cite{VaKo}.
Assume that there is $\lambda\in\sigma\sb d(\eub{J}\eub{L})$, $\lambda>0$.
The relation
$(\eub{J}\eub{L}-\lambda)\Xi=0$
implies that
$\lambda^2\Xi\sb 1=-\eur{L}\sb{-} \eur{L}\sb{+}\Xi\sb 1$.
It follows that $\Xi\sb 1$ is orthogonal to the kernel of
the selfadjoint operator $\eur{L}\sb{-}$ (which is spanned by $\phi\sb\omega$):
\[
\langle\phi,\Xi\sb 1\rangle
=-\frac{1}{\lambda^2}\langle\phi,-\eur{L}\sb{-} \eur{L}\sb{+} \Xi\sb 1\rangle
=-\frac{1}{\lambda^2}\langle \eur{L}\sb{-}\phi,-\eur{L}\sb{+} \Xi\sb 1\rangle=0,
\]
hence
there is $\eta\in L\sp 2(\R,\C)$
such that $\Xi\sb 1=\eur{L}\sb{-} \eta$
and $\lambda^2 \eta=-\eur{L}\sb{+} \Xi\sb 1$.
Thus, the inverse to $\eur{L}\sb{-}$ can be applied:
$\lambda^2 \eur{L}\sb{-}^{-1}\Xi\sb 1=-\eur{L}\sb{+} \Xi\sb 1$.
Then
\[
\lambda^2\langle \eta,\eur{L}\sb{-} \eta\rangle
=
-\langle \Xi\sb 1,\eur{L}\sb{+} \Xi\sb 1\rangle.
\]
Since $\eur{L}\sb{-}$ is positive-definite and $\eta\notin\ker \eur{L}\sb{-}$,
it follows that
$\langle \eta,\eur{L}\sb{-} \eta\rangle>0$.
Since $\lambda>0$, $\langle \Xi\sb 1,\eur{L}\sb{+} \Xi\sb 1\rangle<0$,
therefore the quadratic form $\langle\cdot,\eur{L}\sb{+}\cdot\rangle$
is not positive-definite on vectors orthogonal to $\phi\sb\omega$.
According to Lagrange's principle,
the function $r$ corresponding to the minimum of
$\langle r,\eur{L}\sb{+} r\rangle$
under conditions $\langle r,\phi\sb\omega\rangle=0$
and
$\langle r,r\rangle=1$
satisfies
\begin{equation}\label{dvk-r-sat}
\eur{L}\sb{+} r=\alpha r+\beta\phi\sb\omega,
\qquad
\alpha,\ \beta\in\R.
\end{equation}
Since $\langle r,\eur{L}\sb{+} r\rangle=\alpha$,
we need to know whether $\alpha$ could be negative.
Since $\eur{L}\sb{+}\p\sb x\phi\sb\omega=0$,
one has
$\lambda\sb 1=0\in\sigma\sb p(\eur{L}\sb{+})$.
Due to $\p\sb x\phi\sb\omega$
vanishing at one point ($x=0$),
there is exactly one negative eigenvalue of $\eur{L}\sb{+}$,
which we denote by $\lambda\sb 0\in\sigma\sb p(\eur{L}\sb{+})$.
(This eigenvalue corresponds to some non-vanishing eigenfunction.)
Note that
$\beta\ne 0$, or else $\alpha$ would
have to be equal to $\lambda\sb 0$,
with $r$ the corresponding eigenfunction of $\eur{L}\sb{+}$,
but then $r$, having to be nonzero,
could not be orthogonal to $\phi\sb\omega$.
Denote $\lambda\sb 2=\inf(\sigma(\eur{L}\sb{+})\cap\R\sb{+})>0$.
Let us consider
$f(z)=\langle\phi\sb\omega,(\eur{L}\sb{+}-z)^{-1}\phi\sb\omega\rangle$,
which is defined and is smooth
for $z\in(\lambda\sb 0,\lambda\sb 2)$.
(Note that $f(z)$ is defined for $z=\lambda\sb 1=0$
since the corresponding eigenfunction $\p\sb x\phi\sb\omega$
is odd while $\phi\sb\omega$ is even.)
If $\alpha<0$,
then, by \eqref{dvk-r-sat},
we would have
$f(\alpha)=\langle\phi\sb\omega,(\eur{L}\sb{+}-\alpha)^{-1}\phi\sb\omega\rangle
=\frac{1}{\beta}\langle\phi\sb\omega,r\rangle=0$,
and since $f'(z)>0$,
one has $f(0)>0$.
On the other hand,
$f(0)=\langle\phi\sb\omega,\eur{L}\sb{+}^{-1}\phi\sb\omega\rangle
=\langle\phi\sb\omega,\p\sb\omega\phi\sb\omega\rangle
=\frac 1 2\frac{d}{d\omega}\int\sb{\R}\abs{\phi\sb\omega(x)}^2\,dx$.
Therefore, the linear instability
leads to $\alpha<0$,
which results in
$\frac{d}{d\omega}\int\sb{\R}\abs{\phi\sb\omega(x)}^2\,dx>0$.

Alternatively,
let $\frac{d}{d\omega}\norm{\phi\sb\omega}\sb{L\sp 2}^2>0$.
We consider the function
$f(z)=\langle\phi\sb\omega,(\eur{L}\sb{+}-z)^{-1}\phi\sb\omega\rangle$,
$z\in\rho(\eur{L}\sb{+})$.
Since $f(0)=\langle\phi\sb\omega,\eur{L}\sb{+}^{-1}\phi\sb\omega\rangle>0$,
$f'(z)>0$,
and
$\lim\limits\sb{z\to\lambda\sb 0+}f(z)=-\infty$
(where $\lambda\sb 0<0$ is the smallest eigenvalue of $\eur{L}\sb{+}$),
there is $\alpha\in(\lambda\sb 0,0)\subset\rho(\eur{L}\sb{+})$ such that
$f(\alpha)=\langle\phi\sb\omega,(\eur{L}\sb{+}-\alpha)^{-1}\phi\sb\omega\rangle=0$.
Then we define $r=(\eur{L}\sb{+}-\alpha)^{-1}\phi\sb\omega$.
Since
$\langle \phi\sb\omega,r\rangle=f(\alpha)=0$,
there is $\eta$ such that $r=\eur{L}\sb{-} \eta$.
It follows that the quadratic form
$\eur{L}\sb{-}^{1/2}\eur{L}\sb{+} \eur{L}\sb{-}^{1/2}$
is not positive definite:
\[
\langle \eur{L}\sb{-}^{\frac 1 2}\eta,
(\eur{L}\sb{-}^{\frac 12}\eur{L}\sb{+} \eur{L}\sb{-}^{\frac 12})\eur{L}\sb{-}^{\frac 12}\eta\rangle
=
\langle r,\eur{L}\sb{+} r\rangle
=
\langle r,(\alpha r+\phi\sb\omega)\rangle
=
\alpha\langle r,r\rangle<0.
\]
Thus, there is $\lambda>0$ such that
$-\lambda^2\in\sigma(\eur{L}\sb{-}^{1/2}\eur{L}\sb{+} \eur{L}\sb{-}^{1/2})$;
then also
$-\lambda^2\in\sigma(\eur{L}\sb{-} \eur{L}\sb{+})$.
Let $\xi$ be the corresponding eigenvector,
$\eur{L}\sb{-} \eur{L}\sb{+} \xi=-\lambda^2 \xi$;
then
$
\left[\begin{matrix}
0&\eur{L}\sb{-}\\-\eur{L}\sb{+}&0
\end{matrix}\right]
\left[\begin{matrix}
\xi\\-\frac 1 \lambda \eur{L}\sb{+} \xi
\end{matrix}\right]
=
\lambda
\left[\begin{matrix}
\xi\\-\frac 1 \lambda \eur{L}\sb{+} \xi
\end{matrix}\right],
$
hence $\lambda\in\sigma(\eub{J}\eub{L})$.
\end{proof}

\bigskip

Our conclusions:

\begin{enumerate}
\item
Point eigenvalues
of the linearized Dirac equation
may bifurcate
(as $\omega$ changes)
from the origin,
when
the dimension of the generalized null space
jumps up
(when the Vakhitov-Kolokolov criterion breaks down).
\item
Since the spectrum of
the linearization
does not have to be a subset of $\R\cup i\R$,
there may also be point eigenvalues
which bifurcate
from the
imaginary axis into the complex plane.
(We do not know particular examples of such behavior
for the nonlinear Dirac equation.)
\item
Moreover,
there may be point eigenvalues already present in the spectra
of linearizations at arbitrarily small solitary waves.
Formally, we could say that these eigenvalues
bifurcate from the essential spectrum of the
free Dirac operator (divided by $i$),
which can be considered as the linearization
of the nonlinear Dirac equation
at the zero solitary wave.
\end{enumerate}

In the present paper
we investigate the first scenario.
The main result
(Lemma~\ref{dvk-lemma-bifurcations-zero})
states that if the Vakhitov-Kolokolov breaks down at some
point $\omega\sb\ast$,
then, generically, the solitary waves with
$\omega$ from an open one-sided neighborhood of $\omega\sb\ast$
are linearly unstable.

We also demonstrate the presence of the eigenvalue $\pm 2\omega i$
in the spectrum of the linearized operator
(Corollary~\ref{dvk-cor-2omega})
and obtain Virial identities,
or Pohozhaev theorem, for the nonlinear Dirac equation
(Lemma~\ref{dvk-lemma-vir}),
which we need for the analysis of the zero eigenvalue
of the linearized operator.


\section{Linearization of the nonlinear Dirac equation}

The nonlinear Dirac equation in $\R^n$
has the form
\begin{equation}\label{dvk-nld-nd}
i\p\sb t\psi
=-i\sum\sb{j=1}\sp{n}\alpha\sb j\p\sb j\psi
+g(\psi\sp\ast\beta\psi)\beta\psi,
\qquad
\psi(x,t)\in\C^N,
\quad
x\in\R^n,
\end{equation}
where
$\p\sb j=\frac{\p}{\p x\sp j}$,
$N$ is even
and $g$ smooth,
with $m:=g(0)>0$.
The Dirac matrices $\alpha\sb j$ and $\beta$
satisfy the relations
\[
\alpha\sb j\alpha\sb k
+\alpha\sb k\alpha\sb j
=2\delta\sb{jk}I\sb N,
\qquad
\alpha\sb j\beta
+\beta\alpha\sb j=0,
\qquad
\beta^2=I\sb N,
\qquad
1\le j,k\le n,
\]
where $I\sb N$ is an $N\times N$ unit matrix.
We will always assume that
$\beta
=\left[\begin{matrix}I\sb{N/2}&0\\0&-I\sb{N/2}\end{matrix}\right]$.
In the case $n=1$, we assume
$\alpha\sb 1=-\sigma\sb 2$;
in the case $n=3$, one could take
$\alpha\sb j
=\left[\begin{matrix}0&\sigma\sb j\\\sigma\sb j&0\end{matrix}\right]$,
where $\sigma\sb j$ are the standard Pauli matrices.
Equation \eqref{dvk-nld-nd},
usually with $g(s)=1-s$,
is called the Soler model \cite{PhysRevD.1.2766},
which has been
receiving a lot of attention in theoretical physics
in relation to classical models of elementary particles.

\begin{remark}
In terms of the Dirac $\gamma$-matrices,
equation (\ref{dvk-nld-nd}) takes the
explicitly relativistically-invariant form
$
i\gamma\sp \mu\p\sb \mu \psi=g(\psi\sp\ast\beta\psi)\psi,
$
where
$
\ \gamma\sp 0=\beta$,
$\ \gamma\sp j=\beta\alpha\sb{j}$,
$\ \p\sb 0=\p\sb t$,
$\ \p\sb j=\p\sb{x\sb{j}}$.
\end{remark}

\begin{definition}
The solitary waves are solutions
to \eqref{dvk-nld-nd}
of the form
$\phi\sb\omega(x)e^{-i\omega t}$,
$\phi\sb\omega\in H\sp 1(\R^n,\C^N)$,
$\omega\in\R$.
\end{definition}

Below, we assume that there are solitary waves
for $\omega$ from some nonempty set $\Omega\subset\R$:
\begin{equation}\label{dvk-sw}
\phi\sb\omega e^{-i\omega t},
\qquad
\omega\in\Omega\subset\R,
\qquad
\phi\sb\omega\in H\sp 1(\R^n,\C^N),
\end{equation}
with $\phi\sb\omega$ smoothly depending on $\omega$.

\medskip

We will not indicate the dependence on $\omega$ explicitly,
and will write $\phi$ instead of $\phi\sb\omega$.

\medskip

The profile $\phi$ of a stationary wave satisfies
the stationary nonlinear Dirac equation
\begin{equation}\label{dvk-nlds}
\mathcal{L}\sb{-}\phi
:=(-i\sum\sb{j=1}\sp{n}\alpha\sb j\p\sb j-\omega+g(\phi\sp\ast\beta\phi)\beta)\phi=0.
\end{equation}
The energy and charge functionals
corresponding to the nonlinear Dirac equation \eqref{dvk-nld-nd}
are given by
\[
E(\psi)
=\int\sb{R^n}\left(
-i\sum\sb{j=1}\sp{n}\psi\sp\ast\alpha\sb{j}\p\sb{j}\psi
+G(\psi\sp\ast\beta\psi)\right)\,d^n x,
\qquad 
Q(\psi)
=
\int\sb{\R^n}\psi\sp\ast\psi\,d^n x,
\]
where $G(s)$ is the antiderivative of $g(s)$
which satisfies $G(0)=0$.
$Q(\psi)$ is the charge functional
which is (formally) conserved
for solutions to \eqref{dvk-nld-nd}
due to the $\mathbf{U}(1)$-invariance.
The nonlinear Dirac equation
\eqref{dvk-nld-nd}
can be written in the Hamiltonian form
as
$\p\sb t\Im\psi=-\frac 1 2\delta\sb{\Re\psi}E$,
$\p\sb t\Re\psi=\frac 1 2\delta\sb{\Im\psi}E$,
or simply
$ \dot\psi=-i\delta\sb{\psi\sp\ast}E$.
The relation
\eqref{dvk-nlds}
satisfied by the profile of the solitary wave
$\phi(x)e^{-i\omega t}$
can be written as
\begin{equation}\label{dvk-constrained-minimum}
E'(\phi)=\omega Q'(\phi),
\end{equation}
where the primes denote the Fr\'ech\'et derivative
of the functionals $E(\psi)$, $Q(\psi)$
with respect to $(\Re\psi,\Im\psi)$.

Let us write the solution in the form
$
\psi(x,t)=(\phi(x)+\rho(x,t))e^{-i\omega t},
$
$\rho(x,t)\in\C^{N}$.
The linearized equation on $\rho$ is given by
\begin{equation}\label{dvk-nld-nd-l}
\dot{\rho}
=\mathcal{J}\mathcal{L}\rho,
\end{equation}
where $\mathcal{J}$ corresponds to a multiplication by $1/i$
and
\[
\mathcal{L}\rho
=
\mathcal{L}\sb{-}\rho
+
2g'(\phi\sp\ast\beta\phi)
\beta\phi
\Re(\phi\sp\ast\beta\rho).
\]
Note that,
because of the presence of $\Re(\phi\sp\ast\beta\rho)$,
the action of $\mathcal{L}$
on $\rho\in\C^N$
is $\R$-linear but not $\C$-linear.
Because of this,
it is convenient
to write it as an operator
$\eub{L}$
acting on vectors from $\R^{2N}$;
then \eqref{dvk-nld-nd-l} takes the following form:
\begin{equation}\label{dvk-drs}
\p\sb t
\eub{R}
=
\eub{J}\eub{L}
\eub{R}
,
\qquad
\eub{L}
=
\eub{J}\eub{A}\sb j\p\sb j-\omega+g\eub{B}
+
2g'
\eub{B}
\varPhi
\langle\eub{B}\varPhi,\,\cdot\,\rangle\sb{\R^{2N}},
\qquad
\eub{R}(x,t)
=\left[\begin{matrix}\Re\rho\\\Im\rho\end{matrix}\right],
\end{equation}
where
$g=g(\phi\sp\ast\beta\phi)$,
$g'=g'(\phi\sp\ast\beta\phi)$
and
\begin{equation}\label{dvk-def-aleph}
\varPhi=\left[\begin{matrix}\Re\phi\\\Im\phi\end{matrix}\right],
\qquad
\eub{J}=\left[\begin{matrix}0&I\sb N\\-I\sb N&0\end{matrix}\right],
\qquad
\eub{A}\sb j
=\left[\begin{matrix}
\Re\alpha\sb j&\!\!-\Im\alpha\sb j\\\Im\alpha\sb j&\Re\alpha\sb j
\end{matrix}\right],
\qquad
\eub{B}=\left[\begin{matrix}\beta&0\\0&\beta\end{matrix}\right].
\end{equation}
Note that
$\eub{J}$, $\eub{A}\sb j$, and $\eub{B}$
correspond to multiplication by $-i$, $\alpha\sb j$,
and $\beta$
under the $\C^N\leftrightarrow\R^{2N}$ correspondence. 

\begin{remark}\label{dvk-remark-l0l1}
When $n=1$,
one can take
$\alpha\sb 1=-\sigma\sb 2$
(so that $N=2$).
Then
$\eub{L}$ has a particularly simple form
since
$\phi\in\C^2$ can be chosen valued in $\R^2$:
$
\eub{L}
=
\left[\begin{matrix}
\eur{L}\sb{+}&0
\\
0&\eur{L}\sb{-}
\end{matrix}\right]
$,
with
$\eur{L}\sb{-}=i\sigma\sb 2\p\sb x-\omega+g\beta$,
$
\eur{L}\sb{+}=\eur{L}\sb{-}
+2g'
\beta\phi
\,\langle\beta\phi,\,\cdot\,\rangle\sb{\C^2}
$.
The numerical and analytical study of spectra
of $\eur{L}\sb{-}$, $\eur{L}\sb{+}$
in this case
is contained in
\cite{dirac-1d-arxiv}.
\end{remark}

\begin{lemma}\label{dvk-lemma-sigma-c}
$
\spec\sb{ess}(\eub{L})
=\spec\sb{ess}(\mathcal{L})
=\R\backslash(-m-\omega,m-\omega);
$
$\sigma\sb{ess}(\eub{J}\eub{L})=i(\R\backslash(-m+\omega,m-\omega))$.
\end{lemma}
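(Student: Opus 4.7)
The approach is to apply Weyl's theorem on invariance of essential spectrum under relatively compact perturbations. Split
\[
\eub{L}=\eub{L}\sb 0+\eub{V},
\qquad
\eub{L}\sb 0=\eub{J}\eub{A}\sb j\p\sb j+m\eub{B}-\omega,
\qquad
\eub{V}=(g-m)\eub{B}+2g'\eub{B}\varPhi\langle\eub{B}\varPhi,\,\cdot\,\rangle\sb{\R\sp{2N}},
\]
with $g=g(\phi\sp\ast\beta\phi)$, and likewise $\mathcal{L}=\mathcal{L}\sb 0+\mathcal{V}$ with $\mathcal{L}\sb 0=D\sb m-\omega$ for $D\sb m=-i\alpha\sb j\p\sb j+m\beta$ the free Dirac operator. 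Since $\phi\in H\sp 1(\R\sp n,\C\sp N)$ decays at infinity (so does $g(\phi\sp\ast\beta\phi)-m$, using smoothness of $g$ and $g(0)=m$), multiplication by this decaying coefficient is compact from $H\sp 1$ to $L\sp 2$ by Rellich-Kondrachov applied to compactly supported truncations; the second term of $\eub{V}$ is of rank one. Boundedness of $(\eub{L}\sb 0-z)\sp{-1}\colon L\sp 2\to H\sp 1$ for $z$ in the resolvent set of $\eub{L}\sb 0$ then yields relative $\eub{L}\sb 0$-compactness of $\eub{V}$; the same for $\mathcal{V}$ relative to $\mathcal{L}\sb 0$.

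\textbf{Free Dirac spectrum.} Using the anticommutation relations, the symbol $\alpha\sb j\xi\sb j+m\beta$ has eigenvalues $\pm\sqrt{\abs{\xi}\sp 2+m\sp 2}$, so by Fourier analysis $\spec(D\sb m)=(-\infty,-m]\cup[m,\infty)$ (purely essential, no point spectrum). Hence $\spec\sb{ess}(\mathcal{L}\sb 0)=\R\setminus(-m-\omega,m-\omega)$. Since $\mathcal{L}\sb 0$ is self-adjoint on $L\sp 2(\R\sp n,\C\sp N)$ with real spectrum, viewing it as an $\R$-linear operator on $L\sp 2(\R\sp n,\R\sp{2N})$ gives the same spectrum; combined with Weyl's theorem this establishes $\spec\sb{ess}(\eub{L})=\spec\sb{ess}(\mathcal{L})=\R\setminus(-m-\omega,m-\omega)$.

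\textbf{The operator $\eub{J}\eub{L}$.} In complex form $\eub{J}\eub{L}\sb 0$ corresponds to $-i(D\sb m-\omega)$, whose $\C$-linear spectrum is
\[
-i\bigl((-\infty,-m-\omega]\cup[m-\omega,\infty)\bigr)
=\{iy\sothat y\ge m+\omega\}\cup\{iy\sothat y\le\omega-m\}.
\]
However, $\eub{J}\eub{L}$ is only $\R$-linear on $L\sp 2(\R\sp n,\R\sp{2N})$, so upon complexification the spectrum is automatically closed under complex conjugation and additionally contains $\{iy\sothat y\le-m-\omega\}\cup\{iy\sothat y\ge m-\omega\}$. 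For $\abs{\omega}<m$ (the regime where solitary waves exist), taking the union of the four rays yields $\{iy\sothat y\ge m-\omega\}\cup\{iy\sothat y\le-(m-\omega)\}=i(\R\setminus(-m+\omega,m-\omega))$. The same relative compactness argument as before promotes this to $\spec\sb{ess}(\eub{J}\eub{L})$.

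\textbf{Main obstacle.} The only real subtlety is the bookkeeping between $\C$-linear and $\R$-linear spectra: for $\eub{L}$ and $\mathcal{L}$ the free spectrum is real and the two coincide, but for $\eub{J}\eub{L}$ the free $\C$-linear spectrum is a pair of imaginary rays shifted asymmetrically by $\omega$, and it is precisely the $\R$-linearity that forces conjugation symmetry and makes the final essential spectrum symmetric about $0$. Everything else (decay of $\phi$, Rellich-Kondrachov, Weyl's theorem) is routine.
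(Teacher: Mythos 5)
Your proposal is correct and takes essentially the same route as the paper: the paper also reads off the essential spectrum from the free Dirac symbol at $\abs{x}\to\infty$ (i.e., treats the decaying potential and rank-one terms as a relatively compact perturbation) and then accounts for the two eigenvalues $\pm i$ of $\eub{J}$, which is exactly your $\R$-linear/conjugation-symmetry bookkeeping. You merely spell out the Weyl-theorem and Rellich--Kondrachov details that the paper leaves implicit.
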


\begin{proof}
One has
$\sigma(-i\alpha\sb j\p\sb j+\beta m)=\R\backslash(-m,m)$.
Note also that
$
(-i\alpha\sb j\p\sb j+\beta m)^2=-\Delta+m^2$
has a spectrum $[m^2,\infty)$.
Taking into account that the symbol of
$\mathcal{L}$ at $\abs{x}\to\infty$
is
$
\alpha\sb j\xi\sb j
+\beta m-\omega,
$
one concludes that
$
\spec\sb{ess}(\eub{L})
=\spec\sb{ess}(\mathcal{L})
=\R\backslash(-m-\omega,m-\omega).
$
Since the eigenvalues of $\eub{J}$ are $\pm i$,
corresponding to clock- and counterclockwise rotations in $\C$,
one deduces that
$\sigma\sb{ess}(\eub{J}\eub{L})=i(\R\backslash(-m+\omega,m-\omega))$.
\end{proof}

\begin{lemma}
The null space of $\eub{J}\eub{L}$ is given by
$
\ker\eub{J}\eub{L}
=\mathop{\rm Span}\left\{
\eub{J}\varPhi,
\ \p\sb k\varPhi
\sothat
1\le k\le n
\right\}.
$
\end{lemma}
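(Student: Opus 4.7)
My plan is to prove the two inclusions separately. Since $\eub{J}^2=-I$ is invertible, $\ker\eub{J}\eub{L}=\ker\eub{L}$, and it is convenient to work through the $\C^N\leftrightarrow\R^{2N}$ correspondence from \eqref{dvk-def-aleph}, under which $\eub{L}$ becomes the $\R$-linear operator $\mathcal{L}\rho=\mathcal{L}\sb{-}\rho+2g'\beta\phi\,\Re(\phi\sp\ast\beta\rho)$ and the lemma asserts that the real null space of $\mathcal{L}$ is $\R(-i\phi)\oplus\bigoplus\sb{k=1}\sp{n}\R\,\partial\sb k\phi$.

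The inclusion $\supset$ I would check by direct computation reflecting the gauge and translational symmetries of \eqref{dvk-nld-nd}. For $\eub{J}\varPhi\leftrightarrow -i\phi$: the $\C$-linearity of $\mathcal{L}\sb{-}$ and the stationary equation \eqref{dvk-nlds} give $\mathcal{L}\sb{-}(-i\phi)=-i\mathcal{L}\sb{-}\phi=0$, and the reality of $\phi\sp\ast\beta\phi$ (coming from $\beta=\beta\sp\ast$) gives $\Re(\phi\sp\ast\beta(-i\phi))=0$, which kills the rank-one correction. For each $\partial\sb k\varPhi\leftrightarrow\partial\sb k\phi$: differentiating \eqref{dvk-nlds} in $x\sb k$ yields
\[
\mathcal{L}\sb{-}\partial\sb k\phi+g'(\phi\sp\ast\beta\phi)\,\partial\sb k(\phi\sp\ast\beta\phi)\,\beta\phi=0,
\]
and substituting $\partial\sb k(\phi\sp\ast\beta\phi)=2\Re(\phi\sp\ast\beta\partial\sb k\phi)$ converts this into exactly $\mathcal{L}(\partial\sb k\phi)=0$.

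For the reverse inclusion $\subset$ I would start from $\mathcal{L}\rho=0$, rewritten as $\mathcal{L}\sb{-}\rho=-2g'\,\Re(\phi\sp\ast\beta\rho)\beta\phi$. Lemma~\ref{dvk-lemma-sigma-c} places $\lambda=0$ in a spectral gap of the essential spectrum whenever $\abs{\omega}<m$, so by Weyl's theorem $\ker\eub{L}$ is finite-dimensional. To identify it explicitly, I would exploit the structure of $\mathcal{L}\sb{-}$: the stationary equation gives $\phi\in\ker\mathcal{L}\sb{-}$ and hence also $-i\phi\in\ker\mathcal{L}\sb{-}$ by $\C$-linearity. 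Under the non-degeneracy hypothesis that $\ker\mathcal{L}\sb{-}=\C\phi$, any $\rho\in\ker\mathcal{L}$ decomposes as a homogeneous piece in $\C\phi$ plus a particular solution of $\mathcal{L}\sb{-}\tilde\rho=-2g'r\beta\phi$ with source $r=\Re(\phi\sp\ast\beta\rho)$; the translations $\partial\sb k\phi$ furnish a basis of such particular solutions, and the self-consistency constraint that the free $c\phi+c'(-i\phi)$ component be compatible with $r=\Re(\phi\sp\ast\beta\rho)$ forces $c=0$, leaving only the gauge direction in the homogeneous part.

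The hard part will be this last step: ruling out additional null vectors beyond these gauge and translational generators. This requires the non-degeneracy assumption $\ker\mathcal{L}\sb{-}=\C\phi$ (no hidden moduli beyond gauge in the $\C$-linear piece). In dimension $n=1$ this can be verified directly via the block decomposition $\eub{L}=\mathrm{diag}(\eur{L}\sb{+},\eur{L}\sb{-})$ of Remark~\ref{dvk-remark-l0l1} together with ODE arguments on $\eur{L}\sb{\pm}$; in higher dimensions it is typically imposed as a spectral hypothesis on the Soler profile, with putative rotational moduli of the $\mathrm{SO}(n)$-action handled either by total-angular-momentum constraints on $\phi$ so that they produce no independent null vectors, or by restricting attention to the sector respecting the spherical symmetry of the ground state.
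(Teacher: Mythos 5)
Your verification of the inclusion $\supset$ is correct and is exactly the paper's argument: $\mathcal{L}(-i\phi)=0$ follows from the $\C$-linearity of $\mathcal{L}\sb{-}$, the stationary equation \eqref{dvk-nlds}, and the vanishing of $\Re\big(\phi\sp\ast\beta(-i\phi)\big)$, while $\mathcal{L}\,\p\sb k\phi=0$ follows by differentiating \eqref{dvk-nlds} in $x\sb k$ and recognizing $\p\sb k(\phi\sp\ast\beta\phi)=2\Re(\phi\sp\ast\beta\p\sb k\phi)$; this is precisely \eqref{dvk-ast1} and \eqref{dvk-ldxf}. In fact this inclusion is all the paper itself establishes: despite the wording of the lemma, the reverse inclusion is never proved there, and the exact equality $\mathscr{N}(\eub{L})=\mathop{\rm Span}\{\eub{J}\varPhi,\p\sb k\varPhi\}$ with $\dim\mathscr{N}(\eub{L})=n+1$ reappears later as an explicit \emph{hypothesis} of Lemma~\ref{dvk-lemma-bifurcations-zero}.

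Your attempt to actually prove the inclusion $\subset$ has a genuine gap. From $\mathcal{L}\rho=0$ you write $\mathcal{L}\sb{-}\rho=-2g'\,r\,\beta\phi$ with $r=\Re(\phi\sp\ast\beta\rho)$, and then assert that the translations $\p\sb k\phi$ "furnish a basis of such particular solutions." But the translations solve this equation only for the \emph{specific} sources $r=\tfrac12\p\sb k(\phi\sp\ast\beta\phi)$; the self-consistent source $r$ attached to an unknown null vector $\rho$ is an arbitrary real function, and nothing forces the corresponding particular solution (nor $r$ itself) into that finite-dimensional family. The self-consistency condition is a genuine fixed-point/spectral problem, and ruling out extra solutions is exactly the nondegeneracy statement you would need to prove, not a consequence of the decomposition. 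Moreover, your auxiliary hypothesis $\ker\mathcal{L}\sb{-}=\C\phi$ is itself unavailable here: unlike $\eur{L}\sb{-}$ in the NLS case \eqref{dvk-def-l0-l1}, the Dirac operator $\mathcal{L}\sb{-}$ is strongly indefinite, so there is no minimum-principle argument identifying $\phi$ with a simple lowest mode, and in dimensions $n\ge2$ rotational modes can genuinely enlarge the kernel (which is why the paper restricts to symmetric profiles and still assumes the kernel's dimension). Lemma~\ref{dvk-lemma-sigma-c} only gives finite-dimensionality of the kernel, not its identification. So as written your proof establishes only what the paper's proof establishes ($\supset$); the claimed equality should either be weakened to that inclusion or carried as a spectral assumption, as the paper effectively does.
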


\begin{proof}
Recall that
\[
\mathcal{L}\sb{-}
=-i\sum\sb{j=1}\sp{n}\alpha\sb j\p\sb j-\omega+g(\phi\sp\ast\beta\phi)\beta,
\qquad
\mathcal{L}
=-i\sum\sb{j=1}\sp{n}\alpha\sb j\p\sb j-\omega+g(\phi\sp\ast\beta\phi)\beta
+
2g'(\phi\sp\ast\beta\phi)
\beta\phi
\Re(\phi\sp\ast\beta\,\,\cdot\,).
\]
Since $\phi(x)\in\C^N$ satisfies the stationary
nonlinear Dirac equation \eqref{dvk-nlds},
we get:
\begin{equation}\label{dvk-ast1}
\mathcal{L}(-i\phi)
=
\mathcal{L}\sb{-}(-i\phi)
+2\Re(\phi\sp\ast\beta(-i\phi))=0.
\end{equation}
Taking the derivative of \eqref{dvk-nlds}
with respect to $x\sb k$ yields
\begin{equation}\label{dvk-ldxf}
-i\sum\sb{j=1}\sp{n}\alpha\sb j\p\sb j(\p\sb k\phi)
+g(\phi\sp\ast\beta\phi)\beta\p\sb k\phi
+g'(\phi\sp\ast\beta\phi)
(\p\sb k\phi\sp\ast\beta\phi
+\phi\sp\ast\beta\p\sb k\phi)\beta\phi
-\omega\p\sb k\phi
=\mathcal{L}\p\sb k\phi=0.
\end{equation}
\end{proof}

\begin{lemma}
Let $\alpha\sb 0$
be an hermitian matrix
anticommuting with
$\alpha\sb j$, $1\le j\le n$,
and with $\beta$.
Then
$\alpha\sb 0\phi$
is an eigenfunction of
$\mathcal{L}\sb{-}$
and of $\mathcal{L}$,
corresponding to the eigenvalue $\lambda=-2\omega$.
\end{lemma}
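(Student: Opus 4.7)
The plan is to compute $\mathcal{L}\sb{-}(\alpha\sb 0\phi)$ directly by moving $\alpha\sb 0$ to the left across every operator in $\mathcal{L}\sb{-}$, then apply the stationary equation \eqref{dvk-nlds}, and finally verify that the extra nonlinear term distinguishing $\mathcal{L}$ from $\mathcal{L}\sb{-}$ vanishes on $\alpha\sb 0\phi$.

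First I would write
\[
\mathcal{L}\sb{-}(\alpha\sb 0\phi)
=-i\sum\sb{j=1}\sp n\alpha\sb j\alpha\sb 0\p\sb j\phi
-\omega\alpha\sb 0\phi
+g(\phi\sp\ast\beta\phi)\beta\alpha\sb 0\phi.
\]
Using $\alpha\sb j\alpha\sb 0=-\alpha\sb 0\alpha\sb j$ and $\beta\alpha\sb 0=-\alpha\sb 0\beta$, this becomes
\[
\mathcal{L}\sb{-}(\alpha\sb 0\phi)
=\alpha\sb 0\Bigl(i\sum\sb{j=1}\sp n\alpha\sb j\p\sb j\phi-g(\phi\sp\ast\beta\phi)\beta\phi\Bigr)
-\omega\alpha\sb 0\phi.
\]
The stationary equation \eqref{dvk-nlds} gives $i\sum\sb j\alpha\sb j\p\sb j\phi-g(\phi\sp\ast\beta\phi)\beta\phi=-\omega\phi$, so that $\mathcal{L}\sb{-}(\alpha\sb 0\phi)=-2\omega\,\alpha\sb 0\phi$.

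To pass from $\mathcal{L}\sb{-}$ to $\mathcal{L}$ I only need to check that the correction $2g'(\phi\sp\ast\beta\phi)\beta\phi\,\Re(\phi\sp\ast\beta\alpha\sb 0\phi)$ vanishes on $\alpha\sb 0\phi$. Since $\alpha\sb 0$ is hermitian and anticommutes with $\beta$,
\[
(\phi\sp\ast\beta\alpha\sb 0\phi)\sp\ast
=\phi\sp\ast\alpha\sb 0\beta\phi
=-\phi\sp\ast\beta\alpha\sb 0\phi,
\]
so $\phi\sp\ast\beta\alpha\sb 0\phi$ is purely imaginary, hence $\Re(\phi\sp\ast\beta\alpha\sb 0\phi)=0$ and $\mathcal{L}(\alpha\sb 0\phi)=\mathcal{L}\sb{-}(\alpha\sb 0\phi)=-2\omega\,\alpha\sb 0\phi$.

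There is no genuine obstacle here; the whole argument is a combinatorial check of the Clifford relations. The only mildly nontrivial point is realizing that one should verify $\Re(\phi\sp\ast\beta\alpha\sb 0\phi)=0$ so that the $\R$-linear but not $\C$-linear piece of $\mathcal{L}$ drops out; this is exactly what the hermiticity of $\alpha\sb 0$ combined with $\{\alpha\sb 0,\beta\}=0$ delivers for free.
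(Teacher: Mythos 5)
Your proof is correct and follows essentially the same route as the paper: move $\alpha\sb 0$ across $\mathcal{L}\sb{-}$ using the Clifford anticommutation relations, invoke the stationary equation \eqref{dvk-nlds} to get the eigenvalue $-2\omega$, and then use hermiticity of $\alpha\sb 0$, $\beta$ together with $\{\alpha\sb 0,\beta\}=0$ to show $\Re(\phi\sp\ast\beta\alpha\sb 0\phi)=0$, so the extra term in $\mathcal{L}$ drops out. The paper phrases the first step as $\mathcal{L}\sb{-}\alpha\sb 0\phi=\alpha\sb 0(-\mathcal{L}\sb{-}-2\omega)\phi$, which is only a cosmetic difference from your direct substitution.
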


\begin{remark}
If $n=3$, one can take $\alpha\sb 0=\alpha\sb 1\alpha\sb 2\alpha\sb 3\beta$.
\end{remark}

\begin{proof}
Since $\alpha\sb 0$
anticommutes with $\alpha\sb j$ ($1\le j\le n$)
and with $\beta$,
and taking into account \eqref{dvk-nlds},
we have:
\[
\mathcal{L}\sb{-}\alpha\sb 0\phi
=
(-i\sum\sb{j=1}\sp{n}\alpha\sb j\p\sb j-\omega+g(\phi\sp\ast\beta\phi)\beta)\alpha\sb 0\phi
=
\alpha\sb 0(i\sum\sb{j=1}\sp{n}\alpha\sb j\p\sb j-\omega-g(\phi\sp\ast\beta\phi)\beta)\phi
=\alpha\sb 0(-\mathcal{L}\sb{-}-2\omega)\phi
=-2\omega\alpha\sb 0\phi.
\]
Since
$\alpha\sb 0$ and $\beta$ are Hermitian,
$
2\Re[\phi\sp\ast\beta\alpha\sb 0\phi]
=
\phi\sp\ast\beta\alpha\sb 0\phi
+
\overline{\phi\sp\ast\beta\alpha\sb 0\phi}
=
\phi\sp\ast\{\beta,\alpha\sb 0\}\phi
=0;
$
therefore,
one also has
$
\mathcal{L}\alpha\sb 0\phi
=
\mathcal{L}\sb{-}\alpha\sb 0\phi
=-2\omega\alpha\sb 0\phi.
$
\end{proof}

It follows that the linearization operator
has an eigenvalue $2\omega i$:
\[
2\omega i\in\sigma\sb p(\mathcal{J}\mathcal{L})=\sigma\sb p(\eub{J}\eub{L}).
\]
Since $\sigma(\eub{J}\eub{L})$ is symmetric with respect to
$\R$ and $i\R$,
for any $g(s)$
in \eqref{dvk-nld-nd}
and in any dimension $n\ge 1$,
we have:

\begin{corollary}\label{dvk-cor-2omega}
$\pm 2\omega i$
are $L\sp 2$ eigenvalues of $\eub{J}\eub{L}$.
\end{corollary}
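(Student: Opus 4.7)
The plan is to read off the eigenvalue $2\omega i$ directly from the preceding lemma and then deduce the eigenvalue $-2\omega i$ from a symmetry of the spectrum of $\eub{J}\eub{L}$.

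First, I would recall that $\mathcal{J}$ acts as multiplication by $1/i=-i$ on $\C^N$-valued functions, while the previous lemma established $\mathcal{L}\alpha\sb 0\phi=-2\omega\,\alpha\sb 0\phi$. Combining these,
\[
\mathcal{J}\mathcal{L}(\alpha\sb 0\phi)
=\tfrac{1}{i}(-2\omega)\,\alpha\sb 0\phi
=2\omega i\,\alpha\sb 0\phi,
\]
so $2\omega i\in\sigma\sb p(\mathcal{J}\mathcal{L})=\sigma\sb p(\eub{J}\eub{L})$. The eigenfunction $\alpha\sb 0\phi$ lies in $L\sp 2(\R\sp n,\C\sp N)$ because $\phi\in H\sp 1(\R\sp n,\C\sp N)$ and $\alpha\sb 0$ is a fixed constant matrix; hence this is an honest $L\sp 2$ eigenvalue.

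Next I would obtain $-2\omega i$ from the fact that $\eub{J}\eub{L}$ is a \emph{real-linear} operator on $\R\sp{2N}$-valued functions (this is explicit in the definition \eqref{dvk-drs}). For any such operator, if $\lambda=a+ib$ is an eigenvalue with eigenvector $u+iv$ (with $u,v$ real), then taking complex conjugates in the eigenvalue equation shows that $\bar\lambda=a-ib$ is an eigenvalue with eigenvector $u-iv$. Applied to $\lambda=2\omega i$, this yields $-2\omega i\in\sigma\sb p(\eub{J}\eub{L})$ with eigenfunction in $L\sp 2$ as well. Equivalently, in the $\C\sp N$ formulation, since $\mathcal{J}\mathcal{L}$ is $\R$-linear, $\overline{2\omega i\,\alpha\sb 0\phi}=-2\omega i\,\overline{\alpha\sb 0\phi}$ together with $\overline{(\mathcal{J}\mathcal{L})\xi}=(\mathcal{J}\mathcal{L})\bar\xi$ (because $\mathcal{J}\mathcal{L}$ preserves real parts appropriately) gives the companion eigenvalue.

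There is no serious obstacle here; the only point requiring a moment of care is that $\mathcal{L}$ is only $\R$-linear (because of the $\Re(\phi\sp\ast\beta\,\cdot\,)$ term in \eqref{dvk-drs}), so one should verify that the eigenvalue equation $\mathcal{J}\mathcal{L}\xi=\lambda\xi$ can be read in $\C$. This is exactly what the previous lemma arranged: the computation $2\Re[\phi\sp\ast\beta\alpha\sb 0\phi]=\phi\sp\ast\{\beta,\alpha\sb 0\}\phi=0$ kills the $\R$-linear part of $\mathcal{L}$ on the vector $\alpha\sb 0\phi$, so that $\mathcal{L}\alpha\sb 0\phi=\mathcal{L}\sb{-}\alpha\sb 0\phi$ and the multiplication by $1/i$ commutes with the remaining $\C$-linear operator without ambiguity.
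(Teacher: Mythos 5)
Your proposal is correct and is essentially the paper's own proof: you read off $2\omega i\in\sigma_p(\mathcal{J}\mathcal{L})=\sigma_p(\eub{J}\eub{L})$ from the preceding lemma's identity $\mathcal{L}\,\alpha_0\phi=\mathcal{L}_{-}\alpha_0\phi=-2\omega\,\alpha_0\phi$ combined with $\mathcal{J}$ being multiplication by $1/i$, exactly as the paper does, and you note the same cancellation $2\Re[\phi^\ast\beta\alpha_0\phi]=0$ that makes this legitimate despite $\mathcal{L}$ being only $\R$-linear. For the companion eigenvalue $-2\omega i$ the paper invokes the symmetry of $\sigma(\eub{J}\eub{L})$ with respect to $\R$ and $i\R$, and your complex-conjugation (reality) argument is precisely that symmetry with respect to $\R$, so the two proofs coincide.
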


\begin{remark}
For $\abs{\omega}>m/3$,
the eigenvalues $\pm 2\omega i$
are embedded in the essential spectrum.
This is in contradiction
with the Hypothesis (H:6)
in \cite{2011arXiv1103.4452B}
on the absence of eigenvalues
embedded in the essential spectrum,
although we hope that this difficulty
could be dealt with using a minor change in the proof.
\end{remark}

\begin{remark}
The result of Corollary~\ref{dvk-cor-2omega} takes place for any nonlinearity
$g(\psi\sp\ast\beta\psi)$
and in any dimension.
The spatial dimension $n$ and the number
of components of $\psi$
could be such that there is no matrix
$\alpha\sb 0$ which anticommutes with
$\alpha\sb j$, $1\le j\le n$, and with $\beta$;
then the eigenvector corresponding to $\pm 2\omega i$
can be constructed using the spatial reflections.
\end{remark}




\section{Virial identities}

When studying the bifurcation
of eigenvalues from $\lambda=0$,
we will need some conclusions about
the generalized null space
of the linearized operator.
We will draw these conclusions from the Virial identities,
which are also known
as the Pohozhaev theorem \cite{MR0192184}.
In the context of the nonlinear Dirac equations,
similar results were presented in \cite{MR1344729}.

\begin{lemma}\label{dvk-lemma-var}
For a differentiable family
$\lambda\mapsto\phi\sb\lambda\in H\sp 1(\R^n)$,
$\phi\sb\lambda\at{\lambda=1}=\phi$,
one has
$
\p\sb\lambda E(\phi\sb\lambda)
\at{\lambda=1}
=\omega\p\sb\lambda Q(\phi\sb\lambda)\at{\lambda=1}.
$
\end{lemma}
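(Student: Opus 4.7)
The plan is to observe that Lemma~\ref{dvk-lemma-var} is nothing but a restatement of the Lagrange multiplier identity \eqref{dvk-constrained-minimum}, $E'(\phi)=\omega Q'(\phi)$, obtained by pairing both sides against the tangent vector $\partial_\lambda\phi_\lambda|_{\lambda=1}\in H^1(\R^n,\C^N)$. In other words, the solitary wave profile $\phi=\phi_\omega$ is a critical point of the action functional $\mathcal{S}(\psi):=E(\psi)-\omega Q(\psi)$, and the lemma simply says that the directional derivative of $\mathcal{S}$ at $\phi$ vanishes along any smooth curve through $\phi$.

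Concretely, first I would note that both $E$ and $Q$ are differentiable on $H^1(\R^n,\C^N)$ (the kinetic and charge terms are quadratic, while $G(\psi^\ast\beta\psi)$ is smooth in $\psi$ under the standing assumptions on $g$), so the chain rule yields
\begin{equation*}
\partial_\lambda E(\phi_\lambda)\at{\lambda=1}
=\bigl\langle E'(\phi),\,\partial_\lambda\phi_\lambda\at{\lambda=1}\bigr\rangle,
\qquad
\partial_\lambda Q(\phi_\lambda)\at{\lambda=1}
=\bigl\langle Q'(\phi),\,\partial_\lambda\phi_\lambda\at{\lambda=1}\bigr\rangle,
\end{equation*}
where the pairing is the real $L^2$ inner product on $\R^{2N}$-valued functions used to define the Fr\'ech\'et derivatives. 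Substituting \eqref{dvk-constrained-minimum} into the first identity and comparing with the second immediately produces
\begin{equation*}
\partial_\lambda E(\phi_\lambda)\at{\lambda=1}
=\omega\,\bigl\langle Q'(\phi),\,\partial_\lambda\phi_\lambda\at{\lambda=1}\bigr\rangle
=\omega\,\partial_\lambda Q(\phi_\lambda)\at{\lambda=1}.
\end{equation*}

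There is essentially no obstacle here; the only mild point to verify is that the Fr\'ech\'et derivatives exist in the sense used in \eqref{dvk-constrained-minimum}, and that the variation $\partial_\lambda\phi_\lambda|_{\lambda=1}$ lies in the space on which the derivatives act as bounded linear functionals. Both are ensured by the hypothesis that $\lambda\mapsto\phi_\lambda$ is a differentiable family in $H^1(\R^n,\C^N)$ and by the smoothness of $g$. The real content of the lemma is therefore not the computation itself but the recognition that this identity is the starting point for deducing the virial (Pohozhaev-type) relations which will be exploited later in the analysis of $\ker(\eub{J}\eub{L})^2$.
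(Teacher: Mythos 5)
Your proposal is correct and coincides with the paper's own argument: the paper proves the lemma by observing that it follows immediately from the Lagrange-multiplier relation \eqref{dvk-constrained-minimum}, which is exactly the chain-rule computation you spell out. Filling in the pairing with $\partial_\lambda\phi_\lambda\at{\lambda=1}$ is just making the word ``immediately'' explicit, so there is nothing to change.
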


\begin{proof}
This immediately follows from \eqref{dvk-constrained-minimum}.
\end{proof}

We split the Hamiltonian $E(\psi)$ into 
$
E(\psi)=T(\psi)+V(\psi)
=\sum\limits\sb{j=1}\sp{n}T\sb j(\psi)+V(\psi),
$
where
\[
T\sb j(\psi)
=-i\int\sb{\R^n}\psi\sp\ast\alpha\sb{j}\p\sb{j}\psi\,d^n x
\quad
\mbox{(no summation in $j$)},
\qquad
T(\psi)=\sum\sb{j=1}\sp{n}T\sb j(\psi),
\qquad
V(\psi)=\int\sb{\R^n} G(\psi\sp\ast\beta\psi)\,d^n x.
\]

\begin{lemma}[Pohozhaev Theorem for the nonlinear Dirac equation]
\label{dvk-lemma-vir}
For each solitary wave
$\phi(x)e^{-i\omega t}$,
there are the following relations:
\[
\frac{n-1}{n}T(\phi)+V(\phi)=\omega Q(\phi),
\qquad
T\sb j(\phi)
=\frac{1}{n}T(\phi)
=
\int\sb{\R^n}
\big(
G(\phi\sp\ast\beta\phi)
-
\phi\sp\ast\beta\phi
\,
g(\phi\sp\ast\beta\phi)
\big)\,dx,
\quad
1\le j\le n.
\]
\end{lemma}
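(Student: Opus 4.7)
The plan is to derive both identities from Lemma~\ref{dvk-lemma-var} by applying it to two different one-parameter families of variations of $\phi$: pointwise rescaling and spatial dilation in a single coordinate direction. The stationary equation $\mathcal{L}\sb{-}\phi=0$ (equivalently, the constrained critical point condition $E'(\phi)=\omega Q'(\phi)$) guarantees $\p\sb\lambda E(\phi\sb\lambda)\at{\lambda=1}=\omega\p\sb\lambda Q(\phi\sb\lambda)\at{\lambda=1}$ for any such family, so everything reduces to tracking how $T\sb j$, $V$, and $Q$ scale.

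First, applying Lemma~\ref{dvk-lemma-var} to $\phi\sb\lambda(x)=\lambda\phi(x)$ gives a N\"other-type identity: $T(\phi\sb\lambda)=\lambda^2 T(\phi)$, $Q(\phi\sb\lambda)=\lambda^2 Q(\phi)$, and $V(\phi\sb\lambda)=\int G(\lambda^2\phi\sp\ast\beta\phi)\,d^n x$. Differentiating at $\lambda=1$ yields
\[
T(\phi)+\int\sb{\R^n}\phi\sp\ast\beta\phi\,g(\phi\sp\ast\beta\phi)\,d^n x=\omega Q(\phi).
\]
(This is equivalent to pairing $\mathcal{L}\sb{-}\phi=0$ in $L\sp 2$ with $\phi$.)

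Next, for each $j\in\{1,\dots,n\}$, I would apply Lemma~\ref{dvk-lemma-var} to the single-direction dilation $\phi\sb\lambda(x\sb 1,\dots,x\sb n)=\phi(x\sb 1,\dots,\lambda x\sb j,\dots,x\sb n)$. A change of variable $y\sb j=\lambda x\sb j$ shows $T\sb k(\phi\sb\lambda)=\lambda^{-1}T\sb k(\phi)$ for $k\neq j$, while for $k=j$ the extra factor of $\lambda$ produced by $\p\sb{x\sb j}\phi\sb\lambda$ cancels the Jacobian, so $T\sb j(\phi\sb\lambda)=T\sb j(\phi)$; similarly $V(\phi\sb\lambda)=\lambda^{-1}V(\phi)$ and $Q(\phi\sb\lambda)=\lambda^{-1}Q(\phi)$. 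Differentiating at $\lambda=1$ and invoking Lemma~\ref{dvk-lemma-var} gives, for every $j$,
\[
T(\phi)-T\sb j(\phi)+V(\phi)=\omega Q(\phi).
\]
Since the left-hand side is independent of $j$ and $\sum\sb{j=1}\sp n T\sb j(\phi)=T(\phi)$, this forces $T\sb j(\phi)=T(\phi)/n$ for all $j$, which substituted back yields the first stated identity $\frac{n-1}{n}T(\phi)+V(\phi)=\omega Q(\phi)$. Subtracting this identity from the N\"other-type identity above and using $V(\phi)=\int G(\phi\sp\ast\beta\phi)\,d^n x$ produces the remaining formula $\frac{1}{n}T(\phi)=\int\sb{\R^n}(G(\phi\sp\ast\beta\phi)-\phi\sp\ast\beta\phi\,g(\phi\sp\ast\beta\phi))\,d^n x$.

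The main obstacle is purely the bookkeeping for the one-directional dilation: one has to see why $T\sb j$ is invariant while $T\sb k$ with $k\neq j$ picks up $\lambda^{-1}$, and this asymmetry is precisely what feeds the information $T\sb j=T/n$ into the argument. The only analytic point to check is that $\phi\in H\sp 1(\R^n,\C^N)$ together with smoothness of $g$ gives enough regularity and decay to justify differentiation under the integral sign and change of variables; this is standard for $H\sp 1$ solutions of the stationary Dirac equation and I would simply cite it rather than reprove it.
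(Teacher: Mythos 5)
Your proposal is correct and follows essentially the same route as the paper: one-directional dilations fed into Lemma~\ref{dvk-lemma-var} give $T(\phi)-T\sb j(\phi)+V(\phi)=\omega Q(\phi)$ for each $j$, hence $T\sb j=\tfrac1n T$ and the first identity, which is then combined with $T(\phi)+\int\sb{\R^n}\phi\sp\ast\beta\phi\,g(\phi\sp\ast\beta\phi)\,dx=\omega Q(\phi)$ to get the second. The only (immaterial) difference is that you obtain this last relation from the amplitude family $\lambda\phi$ via Lemma~\ref{dvk-lemma-var}, whereas the paper reads it off directly by pairing the stationary equation \eqref{dvk-nlds} with $\phi$ — as you yourself note, these are the same computation.
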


\begin{proof}
We set
$\phi\sb\lambda(x)=\phi(x\sb 1/\lambda,x\sb 2,\ldots,x\sb n)$.
Since 
$T\sb 1(\phi\sb\lambda)=T\sb 1(\phi)$,
$T\sb j(\phi\sb\lambda)=\lambda T\sb j(\phi)$ for $2\le j\le n$,
$V(\phi\sb\lambda)=\lambda V(\phi)$,
$Q(\phi\sb\lambda)=\lambda Q(\phi)$,
we can use Lemma~\ref{dvk-lemma-var}
to obtain the following relation
(``Virial theorem''):
\begin{equation}\label{dvk-vt1}
\sum\sb{j=2}\sp{n}T\sb j(\phi)+V(\phi)
=
\p\sb\lambda\at{\lambda=1}
\Big(\sum\sb{j=1}\sp{n}T\sb j(\phi\sb\lambda)+V(\phi\sb\lambda)\Big)
=
\omega\p\sb\lambda\at{\lambda=1}Q(\phi\sb\lambda)
=\omega Q(\phi).
\end{equation}
Similarly, rescaling in $x\sb j$, $1\le j\le n$,
we conclude that $T\sb 1(\phi)=\ldots=T\sb n(\phi)=\frac{1}{n}T(\phi)$,
and \eqref{dvk-vt1} gives
\begin{equation}\label{dvk-vt2}
\frac{n-1}{n}T(\phi)+V(\phi)=\omega Q(\phi).
\end{equation}
Moreover, the relation \eqref{dvk-nlds}
yields
$
T(\phi)+
\int\sb{\R^n}
\phi\sp\ast\beta\phi\,g(\phi\sp\ast\beta\phi)
\,dx
=\omega Q(\phi).
$
Together with \eqref{dvk-vt2},
this gives the desired relation
$
\frac{1}{n}T(\phi)
=
\int\sb{\R^n}
\big(
G(\phi\sp\ast\beta\phi)
-
\phi\sp\ast\beta\phi\,
g(\phi\sp\ast\beta\phi)
\big)\,dx.
$
\end{proof}

\begin{remark}\label{dvk-remark-positive}
For all nonlinearities
for which the existence
of solitary wave solutions
is proved in \cite{MR1344729},
one has $G(s)-sG'(s)>0$ for all $s\in\R$ except finitely many points
(e.g. $s=0$);
hence for these solitary waves one has $T(\phi)>0$.
In particular, for
$G(s)=s-\frac{s^2}{2}$,
one has
$T(\phi)=n\int\sb{\R^n}\frac{\abs{\phi\sp\ast\beta\phi}^2}{2}\,dx>0$.
\end{remark}

\section{Bifurcations from $\lambda=0$}
\label{dvk-sect-bifurcations-zero}

\begin{lemma}\label{dvk-lemma-bifurcations-zero}
Assume that
the nonlinearity satisfies
the following inequality
(see Remark~\ref{dvk-remark-positive}):
\begin{equation}\label{dvk-gsg}
\quad
\frac{n+1}{n}
G(s)-s G'(s)>0,
\quad
s\in\R,
\quad
s\ne 0.
\end{equation}
Further, assume that $\phi\sp\ast\phi$ and $\phi\sp\ast\beta\phi$
are spherically symmetric
and that
\[
\mathscr{N}(\eub{L})
=
\mathop{\rm Span}\left\{
\ \eub{J}\varPhi,
\ \p\sb k\varPhi
\sothat
1\le k\le n
\right\};
\qquad
\dim \mathscr{N}(\eub{L})=n+1.
\]
If $\p\sb\omega Q(\phi)\ne 0$,
then the generalized null space
of $\eub{J}\eub{L}$ is given by
\[
\mathscr{N}\sb g(\eub{J}\eub{L})
=
\mathop{\rm Span}\left\{
\ \eub{J}\varPhi,\ \p\sb\omega\varPhi,
\ \p\sb k\varPhi,\ \eub{A}\sb k\varPhi
\sothat
1\le k\le n
\right\};
\qquad
\dim \mathscr{N}\sb g(\eub{J}\eub{L})=2n+2.
\]
If
$\p\sb\omega Q(\phi)$
vanishes at $\omega\sb\ast$,
then
$\dim \mathscr{N}\sb g(\eub{J}\eub{L}\at{\omega\sb\ast})\ge 2n+4$.
Moreover,
generically,
there is an eigenvalue
$\lambda\in\sigma\sb{d}(\eub{J}\eub{L})$
with $\Re\lambda>0$
for $\omega$
from an open one-sided neighborhood
of $\omega\sb\ast$.
\end{lemma}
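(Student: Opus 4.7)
The plan is to proceed in three stages: identify $\mathscr{N}_g(\eub{J}\eub{L})$ at a generic $\omega$, detect the jump in its dimension at $\omega_\ast$, and extract a bifurcating real pair on one side of $\omega_\ast$.

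First, I would verify by direct computation that $\partial_\omega\varPhi$ and each $\eub{A}_k\varPhi$ belong to $\mathscr{N}_g(\eub{J}\eub{L})$. Differentiating \eqref{dvk-nlds} in $\omega$ yields $\eub{L}(\partial_\omega\varPhi)=\varPhi$, whence $(\eub{J}\eub{L})(\partial_\omega\varPhi)=\eub{J}\varPhi\in\ker(\eub{J}\eub{L})$. For $\eub{A}_k\varPhi$, the anticommutation $\{\eub{A}_k,\eub{B}\}=0$ forces $\langle\eub{B}\varPhi,\eub{A}_k\varPhi\rangle=0$, annihilating the rank-one term in $\eub{L}$; then $\{\eub{A}_j,\eub{A}_k\}=2\delta_{jk}$ together with \eqref{dvk-nlds} gives $\eub{L}(\eub{A}_k\varPhi)=2\eub{J}\partial_k\varPhi-2\omega\eub{A}_k\varPhi$, so $(\eub{J}\eub{L})(\eub{A}_k\varPhi)=-2\partial_k\varPhi-2\omega\eub{J}\eub{A}_k\varPhi$, and a further application of $\eub{J}\eub{L}$ annihilates this after a parallel computation. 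This exhibits $2n+2$ independent vectors in $\mathscr{N}_g(\eub{J}\eub{L})$.

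Next, I would show that these vectors exhaust $\mathscr{N}_g(\eub{J}\eub{L})$ when $\partial_\omega Q(\phi)\ne 0$. By the Fredholm alternative, extending the $U(1)$-chain past $\partial_\omega\varPhi$ requires orthogonality of $\partial_\omega\varPhi$ to a suitable element of $\ker((\eub{J}\eub{L})^{\ast})$; the Hamiltonian pairing reduces this obstruction to $\langle\varPhi,\partial_\omega\varPhi\rangle=\frac12\partial_\omega Q(\phi)$. The analogous obstruction for extending each $\eub{A}_k\varPhi$ chain is computed via the Virial identities (Lemma~\ref{dvk-lemma-vir}) and shown to be non-zero under \eqref{dvk-gsg} (see Remark~\ref{dvk-remark-positive}), using spherical symmetry of $\phi^{\ast}\phi$ and $\phi^{\ast}\beta\phi$ to kill the unwanted cross-terms. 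Hence at generic $\omega$ the dimension is exactly $2n+2$, while at $\omega=\omega_\ast$ only the $U(1)$-obstruction vanishes and that chain extends; the Hamiltonian symmetry $\lambda\mapsto-\lambda$ of $\sigma(\eub{J}\eub{L})$ forces a paired extension, so $\dim\mathscr{N}_g(\eub{J}\eub{L}\at{\omega_\ast})\ge 2n+4$.

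Finally, I would carry out a Lyapunov--Schmidt reduction of $\eub{J}\eub{L}(\omega)-\lambda$ on the four-dimensional $U(1)$-chain subspace near $\omega_\ast$, freezing the $2n$ persistent translation/boost directions. The Hamiltonian symmetry $\lambda\mapsto-\lambda$ constrains the reduced $2\times 2$ eigenvalue equation to take the form $\lambda^2=c(\omega-\omega_\ast)+o(\omega-\omega_\ast)$, with $c$ expressible, via the inner products computed in the previous step and the Virial identities, in terms of $\partial_\omega^2 Q(\phi_{\omega_\ast})$. Under the genericity assumption $c\ne 0$, a real pair $\pm\lambda$ emerges on the side of $\omega_\ast$ where $c(\omega-\omega_\ast)>0$, giving the asserted linear instability. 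The hard part is this last stage: for the Dirac equation the analogue of $\eur{L}_{-}$ is not sign-definite, so the variational bookkeeping underlying Lemma~\ref{dvk-lemma-vk} is unavailable; the sign of $c$ must be pinned down by combining the Hamiltonian symmetry with Lemma~\ref{dvk-lemma-vir} and the structural hypothesis \eqref{dvk-gsg}.
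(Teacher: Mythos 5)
Your overall route coincides with the paper's: build the Jordan chains at $\lambda=0$ from the symmetry relations, detect the extension of the $U(1)$ chain through the Fredholm obstruction $\langle\varPhi,\p\sb\omega\varPhi\rangle=\tfrac12\p\sb\omega Q(\phi)$, rule out extension of the translation chains by a Virial computation under \eqref{dvk-gsg}, and finish with a finite-dimensional reduction near $\lambda=0$ in which the spectral symmetry forces $\lambda^2\approx c(\omega-\omega\sb\ast)$. But two steps, as you state them, have genuine gaps. First, the chain computation for $\eub{A}\sb k\varPhi$ fails: from $\eub{J}\eub{L}(\eub{A}\sb k\varPhi)=-2\p\sb k\varPhi-2\omega\eub{J}\eub{A}\sb k\varPhi$, a further application of $\eub{J}\eub{L}$ gives $-2\omega\,\eub{J}\eub{L}\eub{J}\eub{A}\sb k\varPhi$, which does \emph{not} vanish, because $\eub{L}$ is only $\R$-linear and does not commute with $\eub{J}$. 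The correct generalized eigenvector over $\p\sb k\varPhi$ is $\eub{A}\sb k\varPhi-2\omega x\sb k\eub{J}\varPhi$, obtained from the extra identity $\mathcal{L}(i x\sb k\phi)=\alpha\sb k\phi$ (see \eqref{dvk-hap2}); the lemma's listing of $\eub{A}\sb k\varPhi$ should be read with this boost correction. The correction is not cosmetic: the obstruction you must test against $\mathscr{N}((\eub{J}\eub{L})^{\ast})$ is $\langle\eub{A}\sb k\varPhi-2\omega x\sb k\eub{J}\varPhi,\eub{J}\p\sb k\varPhi\rangle=T(\phi)/n+\omega Q(\phi)$, and it is precisely this combination (not $T/n$ alone) that hypothesis \eqref{dvk-gsg} makes positive via Lemma~\ref{dvk-lemma-vir}.

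Second, the final stage is left schematic exactly where the work lies, and the difficulty is misplaced: the lemma asserts instability only on an unspecified one-sided neighborhood, so the sign of $c$ never needs to be pinned down, and the Virial identities are not used for that. What must be identified is the nondegeneracy $\langle{\bm E}\sb 3,\eub{L}{\bm E}\sb 3\rangle\ne0$, equivalently $\mu(\omega\sb\ast)=-\langle\varPhi,{\bm E}\sb 4\rangle\ne0$; this is the "genericity" of the statement. It both stops the chain at ${\bm E}\sb 4$ (so the jump of $\dim\mathscr{N}\sb g$ is exactly two and the reduced block is $4\times4$) and produces the explicit coefficient: pairing the reduced relation with $\varPhi=\eub{J}^{-1}{\bm e}\sb 1$ gives $\sigma\sb 4=\sigma\sb 2\,\p\sb\omega Q(\phi)/\mu(\omega)$, while pairing with $\eub{J}^{-1}{\bm e}\sb 4$ and using anti-selfadjointness of $\eub{L}\eub{J}\eub{L}$ kills the remaining entry, so the nonzero eigenvalues satisfy $\lambda^2=\sigma\sb 4(\omega)$, which changes sign with $\p\sb\omega Q$. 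Without these pairings your claim that $c$ is "expressible in terms of $\p\sb\omega^2 Q$" is an unproved ansatz. Finally, the bound $\dim\mathscr{N}\sb g\ge 2n+4$ at $\omega\sb\ast$ needs either the paper's explicit verification that ${\bm E}\sb 3\perp\mathscr{N}((\eub{J}\eub{L})^{\ast})$ (which is where the spherical-symmetry hypotheses enter, via \eqref{dvk-inte1}--\eqref{dvk-inte2}) or a precisely formulated parity statement for odd Jordan blocks of $\eub{J}\eub{L}$; your appeal to "the symmetry forces a paired extension" is the latter in spirit but is not yet an argument.
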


\begin{remark}
The assumption that
$\phi\sp\ast\phi$ and $\phi\sp\ast\beta\phi$
are spherically symmetric
is satisfied by the ansatz
\[
\phi(x)=\left[\begin{matrix}
g(r)\left[\begin{matrix}1\\0\end{matrix}\right]
\\
if(r)\left[\begin{matrix}\cos\theta\\e^{i\phi}\sin\theta\end{matrix}\right]
\end{matrix}\right]
\]
used in e.g. \cite{wakano-1966,PhysRevD.1.2766,MR1344729,MR1386737}.
\end{remark}

\begin{proof}
Taking the derivative of \eqref{dvk-nlds}
with respect to $\omega$,
we get
\begin{equation}\label{dvk-ldof}
-i\sum\sb{j=1}\sp{n}\alpha\sb j\p\sb j\p\sb\omega\phi
+g(\phi\sp\ast\beta\phi)\beta\p\sb\omega\phi
+g'(\phi\sp\ast\beta\phi)
(\p\sb\omega\phi\sp\ast\beta\phi
+\phi\sp\ast\beta\p\sb\omega\phi)\beta\phi
-\omega\p\sb\omega\phi
-\phi
=\mathcal{L}\p\sb\omega\phi-\phi=0.
\end{equation}
Since
$
\phi\sp\ast\beta\alpha\sb k\phi
+(\alpha\sb k\phi)\sp\ast\beta\phi
=\phi\sp\ast\{\beta,\alpha\}\phi=0$,
we have
\begin{equation}\label{dvk-hap1}
\mathcal{L}(\alpha\sb k\phi)
=\mathcal{L}\sb{-}(\alpha\sb k\phi)
=
-2i\p\sb k\phi
-2\omega\alpha\sb k\phi
-\alpha\sb k\mathcal{L}\sb{-}\phi
=-2i\p\sb k\phi
-2\omega\alpha\sb k\phi.
\end{equation}
Similarly,
since
$
\phi\sp\ast\beta(i x\sb k\phi)
+(i x\sb k\phi)\sp\ast\beta\phi=0$,
\begin{equation}\label{dvk-hap2}
\mathcal{L}(i x\sb k\phi)
=\mathcal{L}\sb{-}(ix\sb k\phi)
=\alpha\sb k\phi
+i x\sb k\mathcal{L}\sb{-}\phi
=\alpha\sb k\phi.
\end{equation}
Using \eqref{dvk-hap1} and \eqref{dvk-hap2},
we have
\begin{equation}\label{dvk-ast2}
\mathcal{L}(\alpha\sb k\phi+2\omega i x\sb k\phi)=-2i\p\sb k\phi.
\end{equation}
Until the end of this section,
it will be more convenient for us
to work in terms of
$\varPhi\in\R^{2N}$ (see \eqref{dvk-def-aleph}).
We summarize the above relations
\eqref{dvk-ast1},
\eqref{dvk-ldxf}, \eqref{dvk-ldof},
\eqref{dvk-hap1}, \eqref{dvk-hap2},
and
\eqref{dvk-ast2}
as follows:
\[
\eub{L} \eub{J}\varPhi=0,
\qquad
\eub{L}\p\sb k\varPhi=0,
\qquad
\eub{L}\p\sb\omega\varPhi=\varPhi,
\qquad
\eub{L}(-x\sb k \eub{J}\varPhi)
=\eub{A}\sb k\varPhi,
\qquad
\eub{L}(\eub{A}\sb k\varPhi-2\omega x\sb k \eub{J}\varPhi)=2\eub{J}\p\sb k\varPhi,
\]
where
\[
\eub{L}=\eub{J}\eub{A}\sb j\p\sb j-\omega+g\eub{B}
+2g'\eub{B}\varPhi
\langle\varPhi,\eub{B}\,\cdot\,\rangle\sb{\R^{2N}},
\qquad
\eub{L}\sb 0=\eub{J}\eub{A}\sb j\p\sb j-\omega+g\eub{B},
\]
with
$g=g(\phi\sp\ast\beta\phi)=g(\varPhi\sp\ast\eub{B}\varPhi)$,
$g'=g'(\phi\sp\ast\beta\phi)=g'(\varPhi\sp\ast\eub{B}\varPhi)$.
There are no $F\sb k$
such that
$
\eub{J}\eub{L} F\sb k=\eub{A}\sb k\varPhi-2\omega x\sb k \eub{J}\varPhi.
$
Indeed,
checking the orthogonality
of $\eub{A}\sb k\varPhi-2\omega x\sb k \eub{J}\varPhi$
with respect to
$\mathscr{N}((\eub{J}\eub{L})\sp\ast)
=\mathop{\rm Span}\left\{\varPhi,\eub{J}\p\sb k\varPhi\right\}$,
we have:
\begin{equation}\label{dvk-nsj}
\langle\eub{A}\sb k\varPhi-2\omega x\sb k \eub{J}\varPhi,\eub{J}\p\sb k\varPhi\rangle
=\int\sb{\R^n}
\phi\sp\ast(-i\sum\sb{k=1}\sp{n}\alpha\sb k\p\sb k)\phi\,dx
+\omega\int\sb{\R^n}\phi\sp\ast\phi\,dx
=\frac{T(\phi)}{n}+\omega Q(\phi).
\end{equation}
Note that there is no summation in $k$ in \eqref{dvk-nsj}.
By Lemma~\ref{dvk-lemma-vir},
the right-hand side of \eqref{dvk-nsj}
is equal to
\begin{equation}\label{dvk-this}
\frac{T}{n}+\omega Q
=
\frac{T}{n}+\frac{(n-1)T}{n}+V
=T+V
=\int\sb{\R^n}(n(G(\rho)-\rho G'(\rho))+G(\rho))\,dx,
\end{equation}
where
$\rho(x)=\phi\sp\ast(x)\beta\phi(x)$.
By \eqref{dvk-gsg},
the right-hand side of \eqref{dvk-this}
is strictly positive.

\bigskip

Due to Lemma~\ref{dvk-lemma-sigma-c},
we can choose
a small counterclockwise-oriented circle
$\gamma$
centered at $\lambda=0$
such that at $\omega=\omega\sb\ast$
the only part of the spectrum
$\sigma(\eub{J} \eub{L})$
inside $\gamma$
is the eigenvalue $\lambda=0$.
Assume that $\mathcal{O}$ is an open neighborhood
of $\omega\sb\ast$
small enough so that
$\sigma(\eub{J}\eub{L})$
does not intersect $\gamma$
for $\omega\in\mathcal{O}$.
Define
\begin{equation}\label{dvk-def-pd}
P\sb 0
=\frac{1}{2\pi i}\oint\sb\gamma\frac{d\lambda}{\lambda-\eub{J}\eub{L}},
\qquad
\omega\in\mathcal{O}.
\end{equation}
For each $\omega\in\mathcal{O}$,
$P\sb 0$
is a projection onto a
finite-dimensional vector space $\mathscr{X}:=\Range(P\sb 0)\subset L\sp 2(\R^n)$.
The operator $P\sb 0\eub{J}\eub{L}$ is bounded
(since $P\sb 0$ is smoothing of order one) and with the finite-dimensional range.
Applying the Fredholm alternative to $P\sb 0\eub{J}\eub{L}$,
we conclude that
there is ${\bm E}\sb 3$ such that
$P\sb 0\eub{J}\eub{L} {\bm E}\sb 3=\p\sb\omega\varPhi$
(hence $\eub{J}\eub{L} {\bm E}\sb 3=\p\sb\omega\varPhi$)
if and only if $\p\sb\omega Q(\phi)=0$.
Indeed, one can check that
it is precisely in this case that 
$\p\sb\omega\varPhi$ is orthogonal to
$\mathscr{N}((\eub{J}\eub{L})\sp\ast)
=\mathop{\rm Span}\left\{\varPhi,\eub{J}\p\sb k\varPhi\right\}$:
\[
\langle\p\sb\omega\varPhi,\varPhi\rangle
=\frac 1 2\p\sb\omega\int\phi\sp\ast\phi\,dx
=\frac 1 2\p\sb\omega Q(\phi),
\]
\[
2\langle\p\sb\omega\varPhi,\eub{J}\p\sb k\varPhi\rangle
=\langle\p\sb\omega\varPhi,
\eub{L}
(\eub{A}\sb k\varPhi-2\omega x\sb k\eub{J}\varPhi)
\rangle
=\langle\eub{L}\p\sb\omega\varPhi,
(\eub{A}\sb k\varPhi-2\omega x\sb k\eub{J}\varPhi)
\rangle
\]
\[
=\langle\varPhi,
(\eub{A}\sb k\varPhi-2\omega x\sb k\eub{J}\varPhi)
\rangle
=\langle\varPhi,\eub{A}\sb k\varPhi\rangle
-2\omega\langle\varPhi,x\sb k\eub{J}\varPhi\rangle
=\int\sb{\R^n}\phi\sp\ast\alpha\sb k\phi\,dx
=0.
\]
The right-hand side
vanishes since it
is the $k$th component
of the (zero) momentum
of the standing solitary wave.

\begin{remark}
Using \eqref{dvk-nlds},
one can explicitly compute
\[
2\omega\langle\phi,\alpha\sb k\phi\rangle
=
\langle\phi,\alpha\sb k(-i\alpha\sb j\p\sb j+g\beta)\phi\rangle
+
\langle(-i\alpha\sb j\p\sb j+g\beta)\phi,\alpha\sb k\phi\rangle
=
-2i\langle\phi,\p\sb k\phi\rangle
=
-i\int\sb{\R^n}\p\sb k(\phi\sp\ast\phi)\,dx
=0.
\]
\end{remark}

Once there is ${\bm E}\sb 3$
such that $\eub{J}\eub{L}{\bm E}\sb 3=\p\sb\omega\varPhi$,
there is also ${\bm E}\sb 4$ such that
$
\eub{J}\eub{L} {\bm E}\sb 4={\bm E}\sb 3,
$
since
${\bm E}\sb 3$ is orthogonal to the null space
$\mathscr{N}((\eub{J}\eub{L})\sp\ast)
=\mathop{\rm Span}\left\{\varPhi,\eub{J}\p\sb k\varPhi\right\}$:
\[
\langle {\bm E}\sb 3,\varPhi\rangle
=\langle {\bm E}\sb 3,\eub{L}\p\sb\omega\varPhi\rangle
=\langle\eub{L} {\bm E}\sb 3,\p\sb\omega\varPhi\rangle
=-\langle \eub{J}\p\sb\omega\varPhi,\p\sb\omega\varPhi\rangle
=0,
\]
\[
2\langle {\bm E}\sb 3,\eub{J}\p\sb k\varPhi\rangle
=\langle {\bm E}\sb 3,\eub{L}(\eub{A}\sb k\varPhi-2\omega x\sb k\eub{J}\varPhi)\rangle
=\langle \eub{L} {\bm E}\sb 3,\eub{A}\sb k\varPhi-2\omega x\sb k\eub{J}\varPhi\rangle
=-\langle \eub{J}\p\sb\omega\varPhi,\eub{A}\sb k\varPhi-2\omega x\sb k\eub{J}\varPhi\rangle
=0.
\]
To check that the right-hand side
of the second line
is indeed equal to zero,
one needs to take into account the following:
\begin{equation}\label{dvk-inte1}
\langle \eub{J}\p\sb\omega\varPhi,\omega x\sb k \eub{J}\varPhi\rangle
=
\omega\langle\p\sb\omega\varPhi,x\sb k \varPhi\rangle
=\frac{\omega}{2}\p\sb\omega\int\sb{\R^n} x\sb k\phi\sp\ast\phi\,dx
=0,
\end{equation}
\begin{eqnarray}\label{dvk-inte2}
&&
\langle \eub{J}\p\sb\omega\varPhi,\eub{A}\sb k\varPhi\rangle
=\langle \eub{J}\p\sb\omega\varPhi,\eub{L}(-x\sb k \eub{J}\varPhi)\rangle
=\langle \eub{J}\p\sb\omega\varPhi,\eub{L}\sb 0(-x\sb k \eub{J}\varPhi)\rangle
=-\langle\eub{L}\sb 0\p\sb\omega\varPhi,x\sb k\varPhi\rangle
\nonumber
\\
&&
=-\langle\varPhi-2g'(\phi\sp\ast\beta\phi)
\eub{B}\varPhi\langle\varPhi,\eub{B}\p\sb\omega\varPhi\rangle\sb{\R^{2N}},
x\sb k\varPhi\rangle
=-\langle\varPhi,x\sb k\varPhi\rangle
+\langle 2g'(\phi\sp\ast\beta\phi)
\eub{B}\varPhi\langle\varPhi,\eub{B}\p\sb\omega\varPhi\rangle\sb{\R^{2N}},
x\sb k\varPhi\rangle
\nonumber
\\
&&
=-\int\sb{\R^n}x\sb k\phi\sp\ast\phi\,dx
+\p\sb\omega\int\sb{\R^n}x\sb k K(\phi\sp\ast\beta\phi)\,dx=0.
\end{eqnarray}
Above,
$K(s)$ is the antiderivative of
$s g'(s)$ such that $K(0)=0$.
The integrals
in the right-hand sides of \eqref{dvk-inte1} and \eqref{dvk-inte2}
are equal to zero
due to our assumption on the symmetry properties
of $\phi\sp\ast\phi$ and $\phi\sp\ast\beta\phi$.
Note that $\mathcal{L}\sb{-}$ is $\C$-linear,
hence commutes with a multiplication by $i$,
and
therefore $\eub{J}$ and $\eub{L}\sb 0$ commute;
we used this
when deriving \eqref{dvk-inte2}.

We will assume that
\begin{equation}\label{dvk-e3e3-nonzero}
\langle{\bm E}\sb 3,\eub{L}{\bm E}\sb 3\rangle\ne 0.
\end{equation}
Then there is no ${\bm E}\sb 5$ such that
$\eub{J}\eub{L}{\bm E}\sb 5={\bm E}\sb 4$,
since
${\bm E}\sb 4$ is not orthogonal
to the null space $\mathcal{N}((\eub{J}\eub{L})\sp\ast)\ni\varPhi$:
\begin{equation}\label{dvk-e4-phi-nonzero}
\langle{\bm E}\sb 4,\varPhi\rangle
=
\langle{\bm E}\sb 4,\eub{L}\p\sb\omega\varPhi\rangle
=
\langle{\bm E}\sb 4,\eub{L} \eub{J}\eub{L}{\bm E}\sb 3\rangle
=
-\langle{\bm E}\sb 3,\eub{L}{\bm E}\sb 3\rangle\ne 0.
\end{equation}

\begin{remark}
If \eqref{dvk-e3e3-nonzero}
is not satisfied, then
the dimension of the generalized null space
of $\eub{J}\eub{L}$ at $\omega\sb\ast$ may jump
by more than two;
this means that there are more than
two eigenvalues colliding at $\lambda=0$
as $\omega$ passes through $\omega\sb\ast$.
We expect that generically
this scenario does not take place.
\end{remark}

We will
break the finite-dimensional vector space $\mathscr{X}=\Range(P\sb 0)$
into a direct sum
\[
\mathscr{X}=\mathscr{Y}\oplus \mathscr{Z},
\qquad
\mathscr{Z}=\mathop{\rm Span}\left\{\p\sb j\varPhi,\eub{A}\sb j\varPhi\sothat
1\le j\le n\right\},
\]
so that both $\mathscr{Y}$ and $\mathscr{Z}$
are invariant with respect to the action of $\eub{J}\eub{L}$.
(Let us mention that
$P\sb 0$,
$\mathscr{X}$, $\mathscr{Y}$, $\mathscr{Z}$, and $\eub{J}\eub{L}$ depend on $\omega$.)
Set
\begin{equation}
{\bm e}\sb 4(\omega)
=
P\sb 0{\bm E}\sb 4
\in \mathscr{Y},
\qquad
{\bm e}\sb 3(\omega)
=
\eub{J}\eub{L}{\bm e}\sb 4
\in \mathscr{Y}.
\end{equation}
Since $P\sb 0$ continuously depends on $\omega$,
${\bm e}\sb 3(\omega)$ and ${\bm e}\sb 4(\omega)$
are continuous functions of $\omega$.
$
\{{\bm e}\sb 1(\omega),{\bm e}\sb 2(\omega),
{\bm e}\sb 3(\omega),{\bm e}\sb 4(\omega)\},
$
$
\omega\in\mathcal{O},
$
is a frame in the space
$\mathscr{Y}$.
For some
continuous functions
$\sigma\sb 1(\omega)$, $\sigma\sb 2(\omega)$,
$\sigma\sb 3(\omega)$, and $\sigma\sb 4(\omega)$,
there is the relation
\begin{equation}\label{dvk-jh2-e2-e4}
\eub{J}\eub{L}
{\bm e}\sb 3(\omega)
=
\sigma\sb 1(\omega){\bm e}\sb 1(\omega)
+
\sigma\sb 2(\omega){\bm e}\sb 2(\omega)
+
\sigma\sb 3(\omega){\bm e}\sb 3(\omega)
+
\sigma\sb 4(\omega){\bm e}\sb 4(\omega),
\qquad
\omega\in\mathcal{O}.
\end{equation}
Evaluating \eqref{dvk-jh2-e2-e4}
at $\omega\sb\ast$,
we conclude that
\begin{equation}\label{dvk-sigma0100}
\sigma\sb 1(\omega\sb\ast)=0,
\qquad
\sigma\sb 2(\omega\sb\ast)=1,
\qquad
\sigma\sb 3(\omega\sb\ast)=0,
\qquad
\sigma\sb 4(\omega\sb\ast)=0.
\end{equation}
In the frame
$
\{
{\bm e}\sb 1(\omega),
\ {\bm e}\sb 2(\omega),
\ {\bm e}\sb 3(\omega),
\ {\bm e}\sb 4(\omega)
\},
$
the operator $\eub{J}\eub{L}$
restricted onto $\mathscr{Y}$
is represented by the matrix
\begin{equation}\label{dvk-rbm}
M=
\left[\begin{matrix}
0&1&\sigma\sb 1(\omega)&0
\\
0&0&\sigma\sb 2(\omega)&0
\\
0&0&\sigma\sb 3(\omega)&1
\\
\quad 0\quad &0&\sigma\sb 4(\omega)&\quad 0\quad
\end{matrix}\right],
\qquad
\omega\in\mathcal{O},
\end{equation}
which is a $4\times 4$ Jordan block at $\omega\sb\ast$.
Let us investigate its entries.
Pairing \eqref{dvk-jh2-e2-e4}
with $\varPhi=\eub{J}^{-1}{\bm e}\sb 1$
and taking into account that
$
\left\langle
\eub{J}^{-1}{\bm e}\sb 1,(\eub{J}\eub{L})^2{\bm e}\sb 4(\omega)
\right\rangle
=
-\left\langle
\eub{L} {\bm e}\sb 1,\eub{J}\eub{L}{\bm e}\sb 4(\omega)
\right\rangle=0,
$
$\langle \eub{J}^{-1}{\bm e}\sb 1,{\bm e}\sb 1\rangle=0$,
and
$\langle \eub{J}^{-1}{\bm e}\sb 1,{\bm e}\sb 3\rangle
=\langle \eub{J}^{-1}{\bm e}\sb 1,\eub{J}\eub{L}{\bm e}\sb 4\rangle
=-\langle\eub{L}{\bm e}\sb 1,{\bm e}\sb 4\rangle
=0$,
we get:
\begin{equation}\label{dvk-pairing}
0
=
\sigma\sb 2(\omega)
\left\langle
\eub{J}^{-1}{\bm e}\sb 1,{\bm e}\sb 2
\right\rangle
+
\sigma\sb 4(\omega)
\left\langle
\eub{J}^{-1}{\bm e}\sb 1,{\bm e}\sb 4
\right\rangle.
\end{equation}
Define the function
\begin{equation}\label{dvk-def-mu}
\mu(\omega):=-\langle \varPhi,{\bm e}\sb 4\rangle,
\qquad
\omega\in\mathcal{O}.
\end{equation}
Since
\begin{equation}\label{dvk-mu-nonzero}
\mu(\omega\sb\ast)
=-\langle \varPhi,{\bm e}\sb 4\rangle\at{\omega\sb\ast}
=-\langle \varPhi\at{\omega\sb\ast},{\bm E}\sb 4\rangle
=\langle{\bm E}\sb 3,\eub{L}{\bm E}\sb 3\rangle
\ne 0
\end{equation}
by \eqref{dvk-e4-phi-nonzero},
we may take
the open neighborhood $\mathcal{O}$ of $\omega\sb\ast$
to be sufficiently small so that
$\mu(\omega)$ does not vanish for $\omega\in\mathcal{O}$.
Then
\begin{equation}\label{dvk-sigma4-sigma2}
\sigma\sb 4(\omega)
=\frac{\sigma\sb 2(\omega)\p\sb\omega Q(\phi)}{\mu(\omega)},
\qquad
\omega\in\mathcal{O}.
\end{equation}
Let us show that $\sigma\sb 3(\omega)$ is identically zero
in an open neighborhood of $\omega\sb\ast$.
Applying $(\eub{J}\eub{L})^2$ to \eqref{dvk-jh2-e2-e4},
we get:
\begin{equation}\label{dvk-jh4-e3-e4}
(\eub{J}\eub{L})^3
{\bm e}\sb 3(\omega)
=
\sigma\sb 3(\omega)(\eub{J}\eub{L})^2{\bm e}\sb 3(\omega)
+
\sigma\sb 4(\omega)(\eub{J}\eub{L})^2{\bm e}\sb 4(\omega).
\end{equation}
Coupling with $\eub{J}^{-1}{\bm e}\sb 4$
and using
$
\langle
\eub{J}^{-1}{\bm e}\sb 4,(\eub{J}\eub{L})^3{\bm e}\sb 3\rangle
=\langle
{\bm e}\sb 3,\eub{L} \eub{J} \eub{L}{\bm e}\sb 3\rangle=0
$
and
$\langle
\eub{J}^{-1}{\bm e}\sb 4,(\eub{J}\eub{L})^2{\bm e}\sb 4\rangle
=-\langle
{\bm e}\sb 4,\eub{L} \eub{J}\eub{L}{\bm e}\sb 4\rangle=0
$
(due to anti-selfadjointness of $\eub{L}\eub{J}\eub{L}$),
we have
\begin{equation}\label{dvk-s3asdf}
\sigma\sb 3(\omega)
\langle \eub{J}^{-1}{\bm e}\sb 4,
(\eub{J}\eub{L})^2{\bm e}\sb 3\rangle=0.
\end{equation}
The factor at $\sigma\sb 3(\omega)$
is nonzero.
Indeed, using \eqref{dvk-sigma0100},
\[
\langle
\eub{J}^{-1}{\bm e}\sb 4,(\eub{J}\eub{L})^2{\bm e}\sb 3\rangle
\at{\omega\sb\ast}
=
\langle
\eub{J}^{-1}{\bm e}\sb 4,\,
\sigma\sb 2{\bm e}\sb 1
+
\sigma\sb 3 \eub{J}\eub{L}{\bm e}\sb 3
+
\sigma\sb 4{\bm e}\sb 3
\rangle
\at{\omega\sb\ast}
=
\langle
\eub{J}^{-1}{\bm e}\sb 4,
{\bm e}\sb 1
\rangle
\at{\omega\sb\ast}
=
-\langle
{\bm e}\sb 4,\varPhi
\rangle
\at{\omega\sb\ast},
\]
which is nonzero due to \eqref{dvk-e4-phi-nonzero}.
We conclude that
$\sigma\sb 3(\omega)$ is identically zero
in an open neighborhood of $\omega\sb\ast$.
We take $\mathcal{O}$ small enough so that
$\sigma\sb 3\at{\mathcal{O}}\equiv 0$.

Near $\omega=\omega\sb\ast$,
the eigenvalues of $\eub{J}\eub{L}$
which are located inside a small contour around $\lambda=0$
coincide with the eigenvalues of the matrix $M$,
defined in \eqref{dvk-rbm}.
Since $\sigma\sb 3$ is identically zero
in $\mathcal{O}$,
these eigenvalues satisfy
\begin{equation}
\lambda^2(\lambda^2-\sigma\sb 4(\omega))=0,
\qquad
\omega\in\mathcal{O}.
\end{equation}
By \eqref{dvk-sigma4-sigma2},
if $\p\sb\omega Q(\phi)$
changes sign at $\omega\sb\ast$,
so does $\sigma\sb 4(\omega)$,
hence
in a one-sided open neighborhood of $\omega\sb\ast$
there are two real eigenvalues of $\eub{J}\eub{L}$,
one positive
(indicating the linear instability)
and one negative.
\end{proof}

\begin{remark}
This argument
is slightly longer than
a similar computation in \cite{MR1995870}
since we do not assume that
$\phi$ could be chosen purely real
(allowing for a common ansatz used in \cite{MR1344729}
in the context of the nonlinear Dirac equation),
and consequently
we could not
take ${\bm e}\sb j$ to be ``imaginary''
for $j$ odd
and ``real''
for $j$ even,
and
enjoy the vanishing of
$\langle \eub{J}^{-1}{\bm e}\sb j,{\bm e}\sb k\rangle$
for $j+k$ is even.
\end{remark}

\section{Concluding remarks}

In the conclusion, let us make several observations.

\begin{remark}
In the case of the nonlinear Schr\"odinger equation,
the function $\mu(\omega)$ is strictly positive
(this is due to positive-definiteness
of $\eur{L}\sb{-}$ in \eqref{dvk-def-l0-l1},
which results in positivity of
\eqref{dvk-mu-nonzero};
see~\cite{VaKo,MR1995870}),
so that the collisions of eigenvalues at
$\lambda=0$ always happen according to the following scenario:
when $dQ/d\omega$ changes from negative to positive
(no matter whether this happens as $\omega$ increases or decreases),
there is a pair of eigenvalues
on the imaginary axis
colliding at $\lambda=0$
and proceeding along the real axis.
Then, further, 
if $dQ/d\omega$ changes from positive to negative,
this pair of real eigenvalues
return to $\lambda=0$
and then retreat onto the imaginary axis.
For the Dirac equation, we can not rule out
that $\mu(\omega)$ changes the sign (becoming negative).
If this were the case,
vanishing of $dQ/d\omega$
would be accompanied with the reversed bifurcation mechanism:
as $dQ/d\omega$ goes from positive to negative,
another pair of imaginary eigenvalues
collide at $\lambda=0$
and proceed along the real axis.
We do not have examples of particular nonlinearities
which lead to such a scenario.
\end{remark}

\begin{remark}
In one dimension,
for the nonlinearity $g(s)=1-s^k$,
$k\in\N$,
using the asymptotics as in \cite{2008arXiv0812.2273G},
one can derive that,
as $\omega\to 1-$,
the function
$\mu(\omega)$
defined in \eqref{dvk-def-mu}
has the asymptotics
\[
\mu(\omega)=O((1-\omega^2)^{\frac{1}{k}-\frac{7}{2}}).
\]
There is the same asymptotics
in the case of the nonlinear Schr\"odinger equation
\eqref{dvk-nls}
with the same nonlinearity $g(s)$.
In particular,
$\lim\limits\sb{\omega\to 1-}\mu(\omega)=+\infty$,
hence $\mu(\omega)$
remains positive
for $\omega$ sufficiently close to $1$
(precisely as for the nonlinear Schr\"odinger equation,
when $\mu(\omega)>0$ for all $\omega$),
suggesting that
the bifurcation scenario for the nonlinear Dirac equation
near $\omega=1$
is the same as for the nonlinear Schr\"odinger equation.
\end{remark}

\begin{remark}
For the nonlinear Dirac equation
in one dimension
with the nonlinearity $g(s)=1-s$
(``Soler model''),
the solitary waves exist
for $\omega\in(0,1)$.
One can explicitly compute that
the charge is given by
$Q(\omega)=\int\sb{\R}\abs{\psi}^2\,dx
=\frac{2\sqrt{1-\omega^2}}{\omega}$
(see e.g. \cite{PhysRevD.12.3880})
so that
$dQ/d\omega<0$ for $\omega\in(0,1)$,
implying that there are no eigenvalues
of $\eub{J}\eub{L}$
colliding
at $\lambda=0$
for any $\omega\in(0,1)$.
According to our numerical results
\cite{dirac-1d-arxiv},
we expect that in this model
the solitary waves are spectrally stable,
at least for $\omega$ sufficiently close to $1$.
\end{remark}

\begin{remark}
Even if $dQ/d\omega$ never vanishes,
so that there are no bifurcations
of nonzero real eigenvalues from $\lambda=0$,
there may be nonzero real eigenvalues
present in the spectrum of {\it all} solitary waves.
We expect that the Vakhitov-Kolokolov criterion
could again be useful here
when applied in the nonrelativistic limit
$\omega\to m$,
when the properties of the nonlinear Dirac equation
are similar to properties of the nonlinear Schr\"odinger
equation.
This idea has been mentioned in \cite{PhysRevE.82.036604}.
Our preliminary results
indicate that
in one dimension,
for $\omega$ sufficiently close to $1$,
the nonlinearity with $g(s)=1-s^k+o(s^k)$
with $k=1$ and $k=2$
does not produce nonzero real eigenvalues,
while for $k\ge 3$ there are two real eigenvalues,
one positive (leading to linear instability) and one negative.
\end{remark}

\notyet{
\section{Conclusion}

We have shown that
the Vakhitov-Kolokolov stability criterion
enters the Grillakis-Shatah-Strauss
formalism is still meaningful for the
nonlinear Dirac equation and similar
equations.
Namely,
the point when the Vakhitov-Kolokolov
stability condition breaks down
is a bifurcation point;
beyond this point, an eigenvalue
with positive real part
emerges from $\lambda=0$.

Let us emphasize, though,
that
while the breaking down of
the Vakhitov-Kolokolov stability condition
definitely indicates the spectral instability,
we can make no conclusion about stability
properties of solitary waves
in the region where this condition is satisfied.
This is in contrast with the nonlinear Schr\"odinger,
Klein-Gordon, and similar systems,
where the Vakhitov-Kolokolov condition
is sufficient to conclude that the solitary waves
minimize the energy under the charge constraint
and as a consequence are orbitally stable.

}


\bigskip
\noindent
ACKNOWLEDGMENTS.
The author is grateful
for fruitful discussions
and helpful comments
to
Gregory Berkolaiko,
Nabile Boussaid,
Marina Chugunova,
Scipio Cuccagna,
Maria Esteban,
Linh Viet Nguyen,
Todd Kapitula,
Ruomeng Lan,
Dmitry Pelinovsky,
Iosif Polterovich,
Bjorn Sandstede,
Eric S\'er\'e,
Walter Strauss,
Wilhelm Schlag,
Boris Vainberg,
and Michael Weinstein.


\begin{thebibliography}{MNNO05}

\bibitem[Abe98]{MR1618672}
S.~Abenda, {\em Solitary waves for {M}axwell-{D}irac and {C}oulomb-{D}irac
  models\/}, Ann. Inst. H. Poincar\'e Phys. Th\'eor., 68 (1998), pp. 229--244.

\bibitem[AC81]{MR637021}
A.~Alvarez and B.~Carreras, {\em Interaction dynamics for the solitary waves of
  a nonlinear {D}irac model\/}, Phys. Lett. A, 86 (1981), pp. 327--332.

\bibitem[AKV83]{MR710348}
A.~Alvarez, P.~Y. Kuo, and L.~V{\'a}zquez, {\em The numerical study of a
  nonlinear one-dimensional {D}irac equation\/}, Appl. Math. Comput., 13
  (1983), pp. 1--15.

\bibitem[AS83]{as1983}
A.~Alvarez and M.~Soler, Phys. Rev. Letters, 50 (1983), pp. 1230--1233.

\bibitem[AS86]{as1986}
A.~Alvarez and M.~Soler, Phys. Rev. D, 34 (1986), pp. 644--645.

\bibitem[BC09]{dirac-1d-arxiv}
G.~Berkolaiko and A.~Comech, {\em On spectral stability of solitary waves of
  nonlinear {D}irac equation on a line\/}, ArXiv e-prints,  (2009), arXiv:
  math-ph/0910.0917.

\bibitem[BC11]{2011arXiv1103.4452B}
N.~{Boussaid} and S.~{Cuccagna}, {\em {On stability of standing waves of
  nonlinear Dirac equations}\/}, ArXiv e-prints,  (2011).

\bibitem[Bou96]{MR1391520}
N.~Bournaveas, {\em Local existence for the {M}axwell-{D}irac equations in
  three space dimensions\/}, Comm. Partial Differential Equations, 21 (1996),
  pp. 693--720.

\bibitem[Bou00]{MR1760283}
N.~Bournaveas, {\em A new proof of global existence for the {D}irac
  {K}lein-{G}ordon equations in one space dimension\/}, J. Funct. Anal., 173
  (2000), pp. 203--213.

\bibitem[BP93]{MR1199635e}
V.~S. Buslaev and G.~S. Perel{\cprime}man, {\em Scattering for the nonlinear
  {S}chr\"odinger equation: states that are close to a soliton\/}, St.
  Petersburg Math. J., 4 (1993), pp. 1111--1142.

\bibitem[Chu07]{chugunova-thesis}
M.~Chugunova, {\em Spectral stability of nonlinear waves in dynamical systems
  ({D}octoral {T}hesis)\/}, McMaster University, Hamilton, Ontario, Canada,
  2007.

\bibitem[CKMS10]{PhysRevE.82.036604}
F.~Cooper, A.~Khare, B.~Mihaila, and A.~Saxena, {\em Solitary waves in the
  nonlinear dirac equation with arbitrary nonlinearity\/}, Phys. Rev. E, 82
  (2010), p. 036604.

\bibitem[CP03]{MR1995870}
A.~Comech and D.~Pelinovsky, {\em Purely nonlinear instability of standing
  waves with minimal energy\/}, Comm. Pure Appl. Math., 56 (2003), pp.
  1565--1607.

\bibitem[Cuc01]{MR1835384}
S.~Cuccagna, {\em Stabilization of solutions to nonlinear {S}chr\"odinger
  equations\/}, Comm. Pure Appl. Math., 54 (2001), pp. 1110--1145.

\bibitem[CV86]{MR847126}
T.~Cazenave and L.~V{\'a}zquez, {\em Existence of localized solutions for a
  classical nonlinear {D}irac field\/}, Comm. Math. Phys., 105 (1986), pp.
  35--47.

\bibitem[Der64]{MR0174304}
G.~H. Derrick, {\em Comments on nonlinear wave equations as models for
  elementary particles\/}, J. Mathematical Phys., 5 (1964), pp. 1252--1254.

\bibitem[EGS96]{MR1386737}
M.~J. Esteban, V.~Georgiev, and E.~S{\'e}r{\'e}, {\em Stationary solutions of
  the {M}axwell-{D}irac and the {K}lein-{G}ordon-{D}irac equations\/}, Calc.
  Var. Partial Differential Equations, 4 (1996), pp. 265--281.

\bibitem[ES95]{MR1344729}
M.~J. Esteban and {\'E}.~S{\'e}r{\'e}, {\em Stationary states of the nonlinear
  {D}irac equation: a variational approach\/}, Comm. Math. Phys., 171 (1995),
  pp. 323--350.

\bibitem[EV97]{MR1434039}
M.~Escobedo and L.~Vega, {\em A semilinear {D}irac equation in {$H\sp s({\bf
  R}\sp 3)$} for {$s\ge 1$}\/}, SIAM J. Math. Anal., 28 (1997), pp. 338--362.

\bibitem[GO10]{2010arXiv1009.5184G}
V.~{Georgiev} and M.~{Ohta}, {\em {Nonlinear instability of linearly unstable
  standing waves for nonlinear Schr\"odinger equations}\/}, ArXiv e-prints,
  (2010).

\bibitem[Gri88]{MR948770}
M.~Grillakis, {\em Linearized instability for nonlinear {S}chr\"odinger and
  {K}lein-{G}ordon equations\/}, Comm. Pure Appl. Math., 41 (1988), pp.
  747--774.

\bibitem[GSS87]{MR901236}
M.~Grillakis, J.~Shatah, and W.~Strauss, {\em Stability theory of solitary
  waves in the presence of symmetry. {I}\/}, J. Funct. Anal., 74 (1987), pp.
  160--197.

\bibitem[{Gua}08]{2008arXiv0812.2273G}
M.~{Guan}, {\em {Solitary Wave Solutions for the Nonlinear Dirac Equations}\/},
  ArXiv e-prints,  (2008).

\bibitem[KS07]{MR2356215}
P.~Karageorgis and W.~A. Strauss, {\em Instability of steady states for
  nonlinear wave and heat equations\/}, J. Differential Equations, 241 (2007),
  pp. 184--205.

\bibitem[LG75]{PhysRevD.12.3880}
S.~Y. Lee and A.~Gavrielides, {\em Quantization of the localized solutions in
  two-dimensional field theories of massive fermions\/}, Phys. Rev. D, 12
  (1975), pp. 3880--3886.

\bibitem[Lis95]{MR1364144}
A.~G. Lisi, {\em A solitary wave solution of the {M}axwell-{D}irac
  equations\/}, J. Phys. A, 28 (1995), pp. 5385--5392.

\bibitem[MNNO05]{MR2108356}
S.~Machihara, M.~Nakamura, K.~Nakanishi, and T.~Ozawa, {\em Endpoint
  {S}trichartz estimates and global solutions for the nonlinear {D}irac
  equation\/}, J. Funct. Anal., 219 (2005), pp. 1--20.

\bibitem[Poh65]{MR0192184}
S.~I. Poho{\v{z}}aev, {\em On the eigenfunctions of the equation ${\Delta}
  u+\lambda f(u)=0$\/}, Dokl. Akad. Nauk SSSR, 165 (1965), pp. 36--39.

\bibitem[PS10]{2010arXiv1008.4514P}
D.~E. {Pelinovsky} and A.~{Stefanov}, {\em {Asymptotic stability of small gap
  solitons in the nonlinear Dirac equations}\/}, ArXiv e-prints,  (2010),
  arXiv:1008.4514.

\bibitem[Sch51a]{PhysRev.84.1}
L.~I. Schiff, {\em Nonlinear meson theory of nuclear forces. {I}. {N}eutral
  scalar mesons with point-contact repulsion\/}, Phys. Rev., 84 (1951), pp.
  1--9.

\bibitem[Sch51b]{PhysRev.84.10}
L.~I. Schiff, {\em Nonlinear meson theory of nuclear forces. {II}.
  {N}onlinearity in the meson-nucleon coupling\/}, Phys. Rev., 84 (1951), pp.
  10--11.

\bibitem[Sha83]{MR723756}
J.~Shatah, {\em Stable standing waves of nonlinear {K}lein-{G}ordon
  equations\/}, Comm. Math. Phys., 91 (1983), pp. 313--327.

\bibitem[Sol70]{PhysRevD.1.2766}
M.~Soler, {\em Classical, stable, nonlinear spinor field with positive rest
  energy\/}, Phys. Rev. D, 1 (1970), pp. 2766--2769.

\bibitem[SS85]{MR804458}
J.~Shatah and W.~Strauss, {\em Instability of nonlinear bound states\/}, Comm.
  Math. Phys., 100 (1985), pp. 173--190.

\bibitem[SV86]{MR848095}
W.~A. Strauss and L.~V{\'a}zquez, {\em Stability under dilations of nonlinear
  spinor fields\/}, Phys. Rev. D (3), 34 (1986), pp. 641--643.

\bibitem[SW92]{MR1170476}
A.~Soffer and M.~I. Weinstein, {\em Multichannel nonlinear scattering for
  nonintegrable equations. {II}. {T}he case of anisotropic potentials and
  data\/}, J. Differential Equations, 98 (1992), pp. 376--390.

\bibitem[SW99]{MR1681113}
A.~Soffer and M.~I. Weinstein, {\em Resonances, radiation damping and
  instability in {H}amiltonian nonlinear wave equations\/}, Invent. Math., 136
  (1999), pp. 9--74.

\bibitem[VK73]{VaKo}
N.~G. Vakhitov and A.~A. Kolokolov, {\em Stationary solutions of the wave
  equation in the medium with nonlinearity saturation\/}, Radiophys. Quantum
  Electron., 16 (1973), pp. 783--789.

\bibitem[Wak66]{wakano-1966}
M.~Wakano, {\em Intensely localized solutions of the classical
  {D}irac-{M}axwell field equations\/}, Progr. Theoret. Phys., 35 (1966), pp.
  1117--1141.

\bibitem[Wei85]{MR783974}
M.~I. Weinstein, {\em Modulational stability of ground states of nonlinear
  {S}chr\"odinger equations\/}, SIAM J. Math. Anal., 16 (1985), pp. 472--491.

\bibitem[Wei86]{MR820338}
M.~I. Weinstein, {\em Lyapunov stability of ground states of nonlinear
  dispersive evolution equations\/}, Comm. Pure Appl. Math., 39 (1986), pp.
  51--67.

\end{thebibliography}

\def\cprime{$'$} \def\cprime{$'$} \def\cprime{$'$} \def\cprime{$'$}
  \def\cprime{$'$} \def\cprime{$'$} \def\cprime{$'$} \def\cprime{$'$}
  \def\cprime{$'$} \def\cydot{\leavevmode\raise.4ex\hbox{.}} \def\cprime{$'$}
  \def\cydot{\leavevmode\raise.4ex\hbox{.}} \def\cprime{$'$} \def\cprime{$'$}
  \def\cprime{$'$} \def\cprime{$'$}

\end{document}